\newtheorem{proposition}{Proposition}[section]
\newtheorem{lemma}[proposition]{Lemma}
\newtheorem{corollary}[proposition]{Corollary}
\newtheorem{theorem}[proposition]{Theorem}
\theoremstyle{definition}
\newtheorem{definition}[proposition]{Definition}
\newtheorem{example}[proposition]{Example}
\newtheorem{examples}[proposition]{Examples}
\newtheorem{remark}[proposition]{Remark}
\newcommand{\thlabel}[1]{\label{th:#1}}
\newcommand{\thref}[1]{Theorem~\ref{th:#1}}
\newcommand{\selabel}[1]{\label{se:#1}}
\newcommand{\seref}[1]{Section~\ref{se:#1}}
\newcommand{\lelabel}[1]{\label{le:#1}}
\newcommand{\leref}[1]{Lemma~\ref{le:#1}}
\newcommand{\prlabel}[1]{\label{pr:#1}}
\newcommand{\prref}[1]{Proposition~\ref{pr:#1}}
\newcommand{\colabel}[1]{\label{co:#1}}
\newcommand{\coref}[1]{Corollary~\ref{co:#1}}
\newcommand{\exlabel}[1]{\label{ex:#1}}
\newcommand{\exref}[1]{Example~\ref{ex:#1}}
\newcommand{\delabel}[1]{\label{de:#1}}
\newcommand{\deref}[1]{Definition~\ref{de:#1}}
\newcommand{\eqlabel}[1]{\label{eq:#1}}
\newcommand{\equref}[1]{(\ref{eq:#1})}
\def\ot{\otimes}
\newcommand{\Cc}{\mathcal{C}}
\newcommand{\Dd}{\mathcal{D}}
\def\*C{{}^*\hspace*{-1pt}{\Cc}}
\def\text#1{{\rm {\rm #1}}}
\begin{document}

\title[Extending structures II: the quantum version]
{Extending structures II: the quantum version}

\author{A. L. Agore}
\address{Faculty of Engineering, Vrije Universiteit Brussel, Pleinlaan 2, B-1050 Brussels, Belgium}
\email{ana.agore@vub.ac.be and ana.agore@gmail.com}

\author{G. Militaru}
\address{Faculty of Mathematics and Computer Science, University of Bucharest, Str.
Academiei 14, RO-010014 Bucharest 1, Romania}
\email{gigel.militaru@fmi.unibuc.ro and gigel.militaru@gmail.com}
\subjclass[2010]{16T10, 16T05, 16S40}

\keywords{crossed product, bicrossed product, the factorization
problem}

\begin{abstract} Let $A$ be a Hopf algebra and $H$ a coalgebra.
We shall describe and classify up to an isomorphism all Hopf
algebras $E$ that factorize through $A$ and $H$: that is $E$ is a
Hopf algebra such that $A$ is a Hopf subalgebra of $E$, $H$ is a
subcoalgebra in $E$ with $1_E \in H$ and the multiplication map
$A\otimes H \to E$ is bijective. The tool we use is a new product,
we call it the unified product, in the construction of which $A$
and $H$ are connected by three coalgebra maps: two actions and a
generalized cocycle. Both the crossed product of an Hopf algebra
acting on an algebra and the bicrossed product of two Hopf
algebras are special cases of the unified product. A Hopf algebra
$E$ factorizes through $A$ and $H$ if and only if $E$ is
isomorphic to a unified product of $A$ and $H$. All such Hopf
algebras $E$ are classified up to an isomorphism that stabilizes
$A$ and $H$ by a Schreier type classification theorem. A coalgebra
version of lazy $1$-cocycles as defined by Bichon and Kassel plays
the key role in the classification theorem.
\end{abstract}

\maketitle

\section*{Introduction}
Let $\Cc$ be a category whose objects are sets endowed with
various algebraic structures $(S)$ and $\Dd$ be a category such
that there exists a forgetful functor $F: \Cc \to \Dd$, i.e. a
functor that forgets some of the structures $(S)$. To illustrate,
the following are forgetful functors:
$$
F: {\mathcal Gr } \to {\mathcal Set}, \quad F: {\mathcal Lie } \to
{\mathcal Vec}, \quad F: {\mathcal Hopf } \to {\mathcal CoAlg},
\quad F: {\mathcal Hopf } \to {\mathcal Alg}
$$
where ${\mathcal Gr }$, ${\mathcal Set}$, ${\mathcal Lie }$,
${\mathcal Vec}$, ${\mathcal Hopf }$, ${\mathcal CoAlg}$,
${\mathcal Alg}$ are the categories of all groups, sets, Lie
algebras, vector spaces, Hopf algebras, coalgebras and
respectively algebras. In this context we formulate a general
problem which may be of interest for many areas of mathematics:

\textbf{Extending Structures Problem (ES):} \textit{Let $F: \Cc
\to \Dd$ be a forgetful functor and consider two objects $C\in
\Cc$, $D \in \Dd$ such that $F(C)$ is a subobject of $D$ in $\Dd$.
Describe and classify all mathematical structures $(S)$ that can
be defined on $D$ such that $D$ becomes an object of $\Cc$ and $C$
is a subobject of $D$ in the category $\Cc$ (the classification is
up to an isomorphism that stabilizes $C$ and a certain type of
fixed quotient $D/C$).}

The classification part of the ES-problem is a challenge for the
introduction of new types of cohomology. The ES-problem
generalizes and unifies two famous and still open problems in the
theory of groups: the \emph{extension problem} of H\"{o}lder
\cite{holder} and the \emph{factorization problem} of Ore
\cite{Ore}. Let us explain this. In \cite{am3} we formulated the
ES-problem at the level of groups, corresponding to the forgetful
functor $F: {\mathcal Gr } \to {\mathcal Set}$: if $A$ is a group
and $E$ a set such that $A \subseteq E$ \cite[Corollary
2.10]{am3}, describe all group structures $(E, \cdot)$ that can be
defined on the set $E$ such that $A$ is a subgroup of $(E,
\cdot)$. In order to do that we have introduced a new product for
groups, called the unified product (\cite[Theorem 2.6]{am3}), such
that both the crossed product (the tool for the extension problem)
and the bicrossed product (the tool for the factorization problem)
of two groups are special cases of it. The unified product for
groups is associated to a group $A$ and a new hidden algebraic
structure $(H, \ast)$, connected by two actions and a generalized
cocycle satisfying some compatibility conditions.

We now take a step forward and formulate the ES-problem at the
level of Hopf algebras corresponding to the forgetful functor $F:
{\mathcal Hopf} \to {\mathcal CoAlg}$:

\textbf{(H-C) Extending Structures Problem:} \textit{ Let $A$ be a
Hopf algebra and $E$ a coalgebra such that $A$ is a subcoalgebra
of $E$. Describe and classify all Hopf algebra structures that can
be defined on $E$ such that $A$ is a Hopf subalgebra of $E$.}

There is of course a dual version of the ES-problem corresponding
to the forgetful functor $F: {\mathcal Hopf} \to {\mathcal Alg}$
to be addressed somewhere else. If at the level of groups the
ES-problem is elementary, for Hopf algebras the problem is more
difficult. Indeed, let $A$ be a group and $E$ a set such that $A
\subseteq E$. For a field $k$ we look at the extension $k [A]
\subseteq k [E]$, where $k[A]$ is the group algebra that is a Hopf
algebra and a subcoalgebra in the group-like coalgebra $k[E]$.
Assume now that $(E, \cdot)$ is a group structure on the set $E$
such that $A$ is a subgroup of $(E, \cdot)$. Thus, we obtain an
extension of Hopf algebras $k [A] \subseteq k [E]$. This extension
of Hopf algebras has a remarkable property: let $H \subseteq E$ be
a system of representatives for the right cosets of the subgroup
$A$ in the group $(E, \cdot)$ such that $1_E \in H$. Since the map
$u : A\times H \rightarrow E$, $u (a, h) = a\cdot h$ is bijective,
we obtain that the multiplication map
$$
k[A] \otimes k[H] \to k[E], \quad a\otimes h \mapsto a\cdot h
$$
is bijective, i.e. the Hopf algebra $k[E]$ \emph{factorizes}
through the Hopf subalgebra $k[A]$ and the subcoalgebra $k[H]$.
This is not valid for arbitrary extensions of Hopf algebras.
Therefore, we have to restrict the (H-C) extending structures
problem to those Hopf algebras $E$ that factorize through a given
Hopf subalgebra $A$ and a given subcoalgebra $H$: we called this
the \textbf{restricted (H-C) ES-problem} and we shall give a
complete answer to it in the present paper. It turns out that $H$
is not only a subcoalgebra of $E$ but will be endowed additionally
with a hidden algebraic structure that will play the role of the
system of representatives for congruence in the theory of groups.

The paper is organized as follows: In the first section we recall
the classical constructions of the crossed product of a Hopf
algebra $H$ acting on an algebra $A$ and of the bicrossed product
(double cross product in Majid's terminology) of two Hopf algebras
$H$ and $A$, as the product that we define will generalize both of
them. In \seref{uni1} we define the concept of an extending
structure of a bialgebra $A$ consisting of a system $\Omega(A) =
\bigl(H, \triangleleft, \, \triangleright, \, f \bigl)$, where $H$
is a coalgebra and an unitary not necessarily associative algebra
such that $A$ and $H$ are connected by three coalgebra maps
$\triangleleft : H \otimes A \rightarrow H$, $\triangleright: H
\otimes A \rightarrow A$, $f: H \otimes H \rightarrow A$
satisfying some natural normalization conditions (\deref{noua1}).
For a bialgebra extending structure $\Omega(A) = \bigl(H,
\triangleleft, \, \triangleright, \, f \bigl)$ of $A$ we define a
product $A \ltimes_{\Omega(A)} H = A \ltimes H$ and call it the
unified product: both the crossed product of an Hopf algebra
acting on an algebra and the bicrossed product (double cross
product in Majid's terminology) of two Hopf algebras are special
cases of the unified product. \thref{1} gives necessary and
sufficient conditions for $A \ltimes H$ to be a bialgebra, which
is precisely the Hopf algebra version of \cite[Theorem 2.6]{am3}
proven for the group case that served as a model for us. The seven
compatibility conditions in \thref{1} are very natural and,
mutatis-mutandis, are the ones (with two reasonable deformations
via the right action $\triangleleft$) that appear in the
construction of the crossed product and the bicrossed product of
two Hopf algebras. \thref{2} proves that a Hopf algebra $E$
factorizes through a Hopf subalgebra $A$ and a subcoalgebra $H$ if
and only if $E$ is isomorphic to a unified product of $A$ and $H$
and gives the answer for the first part of the restricted (H-C)
ES-problem.

\seref{sec3} is devoted to the classification part of the
restricted (H-C) ES-problem. Our view point descends from the
classical classification theorem of Schreier at the level of
groups: all extensions of an abelian group $K$ by a group $Q$ are
classified by the second cohomology group $H^2 (Q, K)$
\cite[Theorem 7.34]{R}. Let $A$ be a Hopf algebra. Two Hopf
algebra extending structures $\Omega(A) = \bigl(H, \triangleleft,
\, \triangleright, \, f \bigl)$ and $\Omega'(A) = \bigl(H,
\triangleleft', \, \triangleright', \, f' \bigl)$ are called
equivalent if there exists $\varphi: A \ltimes' H \rightarrow A
\ltimes H $ a left $A$-module, a right $H$-comodule and a Hopf
algebra map. As in group extension theory we shall prove that any
such morphism $\varphi: A \ltimes' H \rightarrow A \ltimes H $ is
an isomorphism and the following diagram
$$
\xymatrix {& A \ar[r]^{i_{A}} \ar[d]_{Id_{A}} & {A\bowtie H}
\ar[r]^{\pi_{H}}\ar[d]^{\varphi} & H\ar[d]^{Id_{H}}\\
& A\ar[r]^{i_{A}} & {A\bowtie' H}\ar[r]^{\pi_{H}} & H}
$$
is commutative. \thref{3.4} shows that any such morphism $\varphi:
A \ltimes' H \rightarrow A \ltimes H $ is uniquely determined by a
coalgebra lazy $1$-cocycle: i.e.  a unitary coalgebra map $u: H
\rightarrow A$ such that:
$$
h_{(1)} \otimes u(h_{(2)}) = h_{(2)} \otimes u(h_{(1)})
$$
for all $h\in H$. \coref{3.67} is the Schreier type classification
theorem for unified products: the part of the second cohomology
group from the theory of groups will be played now by a special
quotient set $H^{2}_{l, c} (H, A, \triangleleft)$. Also, a
classification result for bicrossed product of two Hopf algebras
is derived from \thref{3.4}.

\section{Preliminaries}
Throughout this paper, $k$ will be a field. Unless specified
otherwise, all algebras, coalgebras, bialgebras, tensor products
and homomorphisms are over $k$. For a coalgebra $C$, we use
Sweedler's $\Sigma$-notation: $\Delta(c) = c_{(1)}\ot c_{(2)}$,
$(I\ot\Delta)\Delta(c) = c_{(1)}\ot c_{(2)}\ot c_{(3)}$, etc
(summation understood). Let $A$ be a bialgebra and $H$ a
coalgebra. $H$ is called a \textit{right $A$-module coalgebra} if
there exists $\triangleleft : H \otimes A \rightarrow H$ a
morphism of coalgebras such that $(H, \triangleleft) $ is a right
$A$-module. For a $k$-linear map $f: H \ot H \to A$ we denote $f
(g, \, h) = f (g\ot h)$; $f$ is the \textit{trivial map} if $f (g,
h) = \varepsilon_H (g) \varepsilon_H (h) 1_A$, for all $g$, $h\in
H$. Similarly, the $k$-linear maps $\triangleleft : H \otimes A
\rightarrow H$, $\triangleright: H \otimes A \rightarrow A$ are
the \textit{trivial actions} if $h \triangleleft a = \varepsilon_A
(a) h$ and respectively $h\triangleright a = \varepsilon_H(h) a$,
for all $a\in A$ and $h\in H$. For further computations, the fact
that $\triangleleft : H \otimes A \rightarrow H$, $\triangleright:
H \otimes A \rightarrow A$ and $f: H \ot H \to A$ are coalgebra
maps can be written explicitly as follows:
\begin{eqnarray}
\Delta_{H}(h \triangleleft a) &{=}& h_{(1)} \triangleleft a_{(1)}
\otimes h_{(2)} \triangleleft a_{(2)}, \,\,\,\,\,
\varepsilon_{A}(h \triangleleft a) =
\varepsilon_{H}(h)\varepsilon_{A}(a)
\eqlabel{6}\\
\Delta_{A}(h \triangleright a) &{=}& h_{(1)} \triangleright
a_{(1)} \otimes h_{(2)} \triangleright a_{(2)}, \,\,\,\,\,
\varepsilon_{A}(h \triangleright a) =
\varepsilon_{H}(h)\varepsilon_{A}(a) \eqlabel{8}\\
\Delta_{A}\bigl(f(g,h)\bigl) &{=}& f(g_{(1)},h_{(1)}) \otimes
f(g_{(2)},h_{(2)}), \,\, \varepsilon_{A}\bigl(f(g,h)\bigl) =
\varepsilon_{H}(g)\varepsilon_{H}(h) \eqlabel{4}
\end{eqnarray}
for all $g, h \in H$, $a \in A$.

\subsection*{Crossed product of Hopf algebras}\selabel{1.1}
The crossed product of a Hopf algebra $H$ acting on a $k$-algebra
$A$ was introduced independently in \cite{BCM} and \cite{DT} as a
generalization of the crossed product of groups acting on
$k$-algebras. Let $H$ be a Hopf algebra, $A$ a $k$-algebra and two
$k$-linear maps $\triangleright: H \otimes A \rightarrow A$, $f: H
\otimes H \rightarrow A$ such that
$$
h \triangleright 1_{A} = \varepsilon_{H}(h)1_{A}, \quad 1_H \rhd a
= a
$$
$$h \rhd (ab) = (h_{(1)} \rhd a) (h_{(2)} \rhd b), \quad f(h,
1_{H}) = f(1_{H}, h) = \varepsilon_{H}(h)1_{A}
$$
for all $h\in H$, $a$, $b\in A$. The \textit{crossed product }$A
\#_{f} \, H$ of $A$ with $H$ is the $k$-module $A\ot H$ with the
multiplication given by
\begin{equation}\eqlabel{001}
(a \# h) (c \# g):= a (h_{(1)}\triangleright c) f\bigl(h_{(2)} ,
g_{(1)}\bigl) \, \# \, h_{(3)}g_{(2)}
\end{equation}
for all $a$, $c\in A$, $h$, $g\in H$, where we denoted $a\ot h$ by
$a\# h$. It can be proved \cite[Lemma 7.1.2]{M} that $A \#_{f} \,
H$ is an associative algebra with identity element $1_A \# 1_H$ if
and only if the following two compatibility conditions hold:
\begin{eqnarray}
[g_{(1)} \triangleright (h_{(1)} \triangleright a)] f\bigl(g_{(2)}
, \, h_{(2)} \bigl) &{=}& f(g_{(1)}, \, h_{(1)}) \bigl ( (g_{(2)}
h_{(2)}) \triangleright a \bigl)\\  \eqlabel{tmc}
\bigl(g_{(1)}
\triangleright f(h_{(1)}, \, l_{(1)})\bigl) f\bigl(g_{(2)}, \,
h_{(2)} l_{(2)} \bigl) &{=}& f(g_{(1)}, \, h_{(1)})
f(g_{(2)}h_{(2)}, \, l)\eqlabel{cc}
\end{eqnarray}
for all $a\in A$, $g$, $h$, $l\in H$. The first compatibility is
called the twisted module condition while \equref{cc} is called
the cocycle condition. The crossed product $A \#_{f} \, H$ was
studied only as an algebra extension of $A$, being an essential
tool in Hopf-Galois extensions theory. If, in addition, we suppose
that $A$ is also a Hopf algebra, a natural question arises:
\emph{when does the crossed product $A \#_{f} \, H$ have a Hopf
algebra structure with the coalgebra structure given by the tensor
product of coalgebras?} In case that $\rhd$ and $f$ are coalgebra
maps, as a consequence of \thref{1}, we will show in
\exref{3exemple} that $A \#_{f} \, H$ is a Hopf algebra if and
only if the following two compatibility conditions hold:
\begin{eqnarray*}
g_{(1)} \otimes g_{(2)} \triangleright a &{=}& g_{(2)} \otimes
g_{(1)} \triangleright a \\
g_{(1)} h_{(1)} \otimes f(g_{(2)}, \, h_{(2)}) &{=}& g_{(2)}
h_{(2)} \otimes f(g_{(1)}, \, h_{(1)})
\end{eqnarray*}
for all $g$, $h \in H$ and $a$, $b \in A$.

\subsection*{Bicrossed product of Hopf algebras}\selabel{1.2}
The bicrossed product of Hopf algebras was introduced by Majid in
\cite[Proposition 3.12]{majid} under the name of double cross
product. We shall adopt the name of bicrossed product from
\cite[Theorem 2.3]{Kassel}. A \textit{matched pair} of bialgebras
is a system $(A, H, \lhd, \rhd)$, where $A$ and $H$ are
bialgebras, $\triangleleft : H \otimes A \rightarrow H$,
$\triangleright: H \otimes A \rightarrow A$ are coalgebra maps
such that $(A, \rhd)$ is a left $H$-module coalgebra, $(H, \lhd)$
is a right $A$-module coalgebra and the following compatibility
conditions hold:
\begin{eqnarray}
1_{H} \triangleleft a &{=}& \varepsilon_{A}(a)1_{H}, \,\,\,
h \triangleright 1_{A}  = \varepsilon_{H}(h)1_{A} \eqlabel{mp1}\\
g \triangleright (ab) &{=}& (g_{(1)} \triangleright a_{(1)}) \bigl
( (g_{(2)}\triangleleft a_{(2)})\triangleright b \bigl)
\eqlabel{mp2} \\
(g h) \triangleleft a &{=}& \bigl( g \triangleleft (h_{(1)}
\triangleright a_{(1)}) \bigl) (h_{(2)} \triangleleft a_{(2)})
\eqlabel{mp3} \\
g_{(1)} \triangleleft a_{(1)} \otimes g_{(2)} \triangleright
a_{(2)} &{=}& g_{(2)} \triangleleft a_{(2)} \otimes g_{(1)}
\triangleright a_{(1)} \eqlabel{mp4}
\end{eqnarray}
for all $a$, $b\in A$, $g$, $h\in H$. Let $(A, H, \lhd, \rhd)$ be
a matched pair of bialgebras; the \textit{bicrossed product} $A
\bowtie H$ of $A$ with $H$ is the $k$-module $A\ot H$ with the
multiplication given by
\begin{equation}\eqlabel{0010}
(a \bowtie h) \cdot (c \bowtie g):= a (h_{(1)}\triangleright
c_{(1)}) \bowtie (h_{(2)} \lhd c_{(2)}) g
\end{equation}
for all $a$, $c\in A$, $h$, $g\in H$, where we denoted $a\ot h$ by
$a\bowtie h$. $A \bowtie H$ is a bialgebra with the coalgebra
structure given by the tensor product of coalgebras and moreover,
if $A$ and $H$ are Hopf algebras, then $A \bowtie H$ has an
antipode given by the formula:
\begin{equation}\eqlabel{antipbic}
S ( a \bowtie h ) := (1_A \bowtie S_H (h)) \cdot (S_A (a) \bowtie
1_H)
\end{equation}
for all $a\in A$ and $h\in H$ \cite[Theorem 7.2.2]{majid2}.

\section{Bialgebra extending structures and unified products} \selabel{uni1}
In this section we shall introduce the unified product for
bialgebras; this will be the tool for answering  the restricted
(H-C) ES-problem. First we need the following:

\begin{definition}\delabel{noua1}
Let $A$ be a bialgebra. An \textit{extending datum of $A$} is a
system $\Omega(A) = \bigl(H, \triangleleft, \, \triangleright, \,
f \bigl)$ where:

$(i)$ $H = \bigl( H, \Delta_{H}, \varepsilon_{H}, 1_{H}, \cdot
\bigl)$ is a $k$-module such that $\bigl( H, \Delta_{H},
\varepsilon_{H}\bigl)$ is a coalgebra, $\bigl( H, 1_{H}, \cdot
\bigl)$ is an unitary, not necessarily associative $k$-algebra,
such that
\begin{equation}\eqlabel{1}
\Delta_{H}(1_{H}) = 1_{H} \otimes
1_{H}
\end{equation}
$(ii)$ The $k$-linear maps $\triangleleft : H \otimes A
\rightarrow H$, $\triangleright: H \otimes A \rightarrow A$, $f: H
\otimes H \rightarrow A$ are morphisms of coalgebras such that the
following normalization conditions hold:
\begin{equation}\eqlabel{2}
\quad h \triangleright 1_{A} = \varepsilon_{H}(h)1_{A}, \quad
1_{H} \triangleright a = a, \quad 1_{H} \triangleleft a =
\varepsilon_{A}(a)1_{H}, \quad h\triangleleft 1_{A} = h
\end{equation}
\begin{equation}\eqlabel{3}
f(h, 1_{H}) = f(1_{H}, h) = \varepsilon_{H}(h)1_{A}
\end{equation}
for all $h \in H$, $a \in A$.
\end{definition}

Let $A$ be a bialgebra and $\Omega(A) = \bigl(H, \triangleleft, \,
\triangleright, \, f \bigl)$ an extending datum of $A$. We denote
by $A \ltimes_{\Omega(A)} H = A \ltimes H$ the $k$-module $A
\otimes H$ together with the multiplication:
\begin{equation}\eqlabel{10}
(a \ltimes h)\bullet(c \ltimes g) := a(h_{(1)}\triangleright
c_{(1)})f\bigl(h_{(2)}\triangleleft c_{(2)}, \, g_{(1)}\bigl) \,
\ltimes \, (h_{(3)}\triangleleft c_{(3)}) \cdot g_{(2)}
\end{equation}
for all $a, c \in A$ and $h, g \in H$, where we denoted $a \otimes
h \in A \otimes H$ by $a \ltimes h$.

\begin{definition}
Let $A$ be a bialgebra and $\Omega(A) = \bigl(H, \triangleleft,
\triangleright, f \bigl)$ be an extending datum of $A$. The object
$A \ltimes H$ introduced above is called \textit{the unified
product of $A$ and $\Omega(A)$} if $A \ltimes H$ is a bialgebra
with the multiplication given by \equref{10}, the unit $1_{A}
\ltimes 1_{H}$ and the coalgebra structure given by the tensor
product of coalgebras, i.e.:
\begin{eqnarray}
\Delta_{A \ltimes H} (a \ltimes h) &{=}& a_{(1)} \ltimes h_{(1)}
\otimes a_{(2)}
\ltimes h_{(2)}\eqlabel{11}\\
\varepsilon_{A \ltimes H} (a \ltimes h) &{=}&
\varepsilon_{A}(a)\varepsilon_{H}(h)\eqlabel{12}
\end{eqnarray}
for all $h \in H$, $a \in A$. In this case the extending datum
$\Omega(A) = (H, \triangleleft, \, \triangleright, \, f)$ is
called a \textit{bialgebra extending structure} of $A$. The maps
$\rhd$ and $\lhd$ are called the \textit{actions} of $\Omega(A)$
and $f$ is called the $(\rhd, \lhd)$-\textit{cocycle} of
$\Omega(A)$. A bialgebra extending structure $\Omega(A) = (H,
\triangleleft, \triangleright, f)$ is called a \textit{Hopf
algebra extending structure} of $A$ if $A \ltimes H$ has an
antipode.
\end{definition}

The multiplication given by \equref{10} has a rather complicated
formula; however, for some specific elements we obtain easier
forms which will be useful for future computations.

\begin{lemma}
Let $A$ be a bialgebra and $\Omega(A) = \bigl(H, \triangleleft, \,
\triangleright, \, f \bigl)$ an extending datum of $A$. The
following cross-relations hold:
\begin{eqnarray}
(a \ltimes 1_{H})\bullet(c \ltimes g) &{=}& ac \ltimes
g\eqlabel{13}\\
(a \ltimes g)\bullet(1_{A} \ltimes h) &{=}& af(g_{(1)}, \,
h_{(1)})
\ltimes g_{(2)}\cdot h_{(2)}\eqlabel{14}\\
(a \ltimes g)\bullet(b \ltimes 1_{H}) &{=}& a(g_{(1)}
\triangleright b_{(1)}) \ltimes g_{(2)} \triangleleft
b_{(2)}\eqlabel{15}
\end{eqnarray}
\end{lemma}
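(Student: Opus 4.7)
The plan is to prove each of the three identities by direct substitution into the defining formula \equref{10} of the multiplication $\bullet$, using only the normalization data in \equref{1}, \equref{2}, \equref{3}, together with counitality in the coalgebras $A$ and $H$. No further structural information is required.

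For \equref{13}, I would set $h = 1_H$ in \equref{10}. Since $\Delta_H(1_H) = 1_H \otimes 1_H$ by \equref{1}, all three factors $h_{(1)}, h_{(2)}, h_{(3)}$ collapse to $1_H$. Applying the normalization conditions $1_H \triangleright c_{(1)} = c_{(1)}$ and $1_H \triangleleft c_{(i)} = \varepsilon_A(c_{(i)}) 1_H$ from \equref{2}, together with $f(1_H, g_{(1)}) = \varepsilon_H(g_{(1)}) 1_A$ from \equref{3}, the right hand side collapses to $a\, c_{(1)} \varepsilon_A(c_{(2)}) \varepsilon_A(c_{(3)}) \varepsilon_H(g_{(1)}) 1_A \ltimes g_{(2)}$, which by counitality in $A$ and $H$ equals $ac \ltimes g$.

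For \equref{14} I would set $a := a$, $c := 1_A$, $g := g$, $h := h$ in \equref{10}. Using $\Delta_A(1_A) = 1_A \otimes 1_A$, every occurrence of $1_{A(i)}$ becomes $1_A$; then the normalization conditions $g_{(1)} \triangleright 1_A = \varepsilon_H(g_{(1)}) 1_A$ and $g_{(i)} \triangleleft 1_A = g_{(i)}$ from \equref{2} simplify the expression to $a\,\varepsilon_H(g_{(1)}) f(g_{(2)}, h_{(1)}) \ltimes g_{(3)} \cdot h_{(2)}$, which by counitality in $H$ collapses to $a f(g_{(1)}, h_{(1)}) \ltimes g_{(2)} \cdot h_{(2)}$. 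For \equref{15} I would similarly specialize $g := g$, $h := 1_H$, using $\Delta_H(1_H) = 1_H \otimes 1_H$ together with $f(x, 1_H) = \varepsilon_H(x) 1_A$ from \equref{3}; after absorbing the counit factors $\varepsilon_H(g_{(2)})$ and $\varepsilon_A(b_{(2)})$ arising respectively from $f(g_{(2)}\triangleleft b_{(2)}, 1_H)$ and the unit $1_H \cdot g_{(2)} \cdot h_{(2)}$ in the $H$-slot, the expression reduces to $a(g_{(1)} \triangleright b_{(1)}) \ltimes g_{(2)} \triangleleft b_{(2)}$ as required.

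There is no genuine obstacle here: the only subtlety is bookkeeping in the Sweedler indices, since the multiplication \equref{10} involves $\Delta^2$ applied to $h$ and $\Delta^2$ applied to $c$. The key observation that streamlines the three calculations is that whenever the input to a coalgebra map (such as $\triangleright, \triangleleft$ or $f$) contains a $1_A$ or $1_H$ factor in one tensorand, the grouplike property $\Delta(1) = 1 \otimes 1$ lets us strip that factor out, and the corresponding normalization in \equref{2}--\equref{3} immediately yields a counit factor that can be absorbed via counitality in the remaining Sweedler components.
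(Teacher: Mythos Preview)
Your proposal is correct and is exactly the approach the paper takes: the paper's proof reads ``Straightforward using the normalization conditions \equref{1}--\equref{3},'' and your three specializations of \equref{10} carry this out verbatim. One small wording slip in your treatment of \equref{15}: both counit factors $\varepsilon_H(g_{(2)})$ and $\varepsilon_A(b_{(2)})$ arise from $f(g_{(2)}\triangleleft b_{(2)},1_H)=\varepsilon_H(g_{(2)}\triangleleft b_{(2)})1_A=\varepsilon_H(g_{(2)})\varepsilon_A(b_{(2)})1_A$ (using that $\triangleleft$ is a coalgebra map), while the $H$-slot simplifies via $(g_{(3)}\triangleleft b_{(3)})\cdot 1_H = g_{(3)}\triangleleft b_{(3)}$ from unitality of $\cdot$; there is no ``$1_H\cdot g_{(2)}\cdot h_{(2)}$'' term here.
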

\begin{proof}
Straightforward using the normalization conditions
\equref{1}-\equref{3}.
\end{proof}

It follows from \equref{13} that the map $i_{A}: A \rightarrow A
\ltimes H$, $i_{A}(a) := a \ltimes 1_{H}$, for all $a \in A$, is a
$k$-algebra map and
\begin{equation}\eqlabel{16}
(a \ltimes 1_{H})\bullet(1_{A} \ltimes g) = a \ltimes g
\end{equation}
for all $a\in A$ and $g\in H$. Hence the set $T := \{a \ltimes
1_{H} ~|~ a \in A\} \cup \{1_{A} \ltimes g ~|~ g \in H\}$ is a
system of generators as an algebra for $A \ltimes H$ and this
observation will turn out to be essential in proving the next
theorem which provides necessary and sufficient conditions for $A
\ltimes H$ to be a bialgebra: it is the Hopf algebra version of
\cite[Theorem 2.6]{am3} where the unified product for groups is
constructed.

\begin{theorem}\thlabel{1}
Let $A$ be a bialgebra and $\Omega(A) = \bigl(H, \triangleleft, \,
\triangleright, \, f \bigl)$ an extending datum of $A$. The
following statements are equivalent:

$(1)$ $A \ltimes H$ is an unified product;

$(2)$ The following compatibilities hold:
\begin{enumerate}
\item[(2a)] $\Delta_{H} : H \to H\otimes H$ and
$\varepsilon_{H} : H \to k$ are $k$-algebra maps;\\
\item[(2b)] $(H, \lhd)$ is a right $A$-module
structure;\\
\item[(2c)] $(g\cdot h)\cdot l = \bigl(g \triangleleft
f(h_{(1)}, \, l_{(1)})\bigl)\cdot (h_{(2)}\cdot l_{(2)})$\\
\item[(2d)] $g \triangleright (ab) = (g_{(1)} \triangleright
a_{(1)})[(g_{(2)}\triangleleft a_{(2)})\triangleright b]$\\
\item[(2e)] $(g\cdot h) \triangleleft a = [g \triangleleft
(h_{(1)}
\triangleright a_{(1)})] \cdot (h_{(2)} \triangleleft a_{(2)})$\\
\item[(2f)] $[g_{(1)} \triangleright (h_{(1)} \triangleright
a_{(1)})]f\Bigl(g_{(2)} \triangleleft (h_{(2)} \triangleright
a_{(2)}), \, h_{(3)} \triangleleft a_{(3)}\Bigl) =
f(g_{(1)}, \, h_{(1)})[(g_{(2)} \cdot h_{(2)}) \triangleright a]$\\
\item[(2g)] $\Bigl(g_{(1)} \triangleright f(h_{(1)}, \,
l_{(1)})\Bigl) f\Bigl(g_{(2)} \triangleleft f(h_{(2)}, \,
l_{(2)}), \, h_{(3)} \cdot l_{(3)}\Bigl) =
f(g_{(1)}, \, h_{(1)})f(g_{(2)} \cdot h_{(2)}, \, l)$\\
\item[(2h)] $g_{(1)} \triangleleft a_{(1)} \otimes g_{(2)}
\triangleright a_{(2)} = g_{(2)} \triangleleft a_{(2)} \otimes
g_{(1)} \triangleright a_{(1)}$\\
\item[(2i)] $g_{(1)} \cdot h_{(1)} \otimes f(g_{(2)}, \, h_{(2)})
= g_{(2)} \cdot h_{(2)} \otimes f(g_{(1)}, \, h_{(1)})$
\end{enumerate}
for all $g, h, l \in H$ and $a, b \in A$.
\end{theorem}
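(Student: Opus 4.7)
The plan is to test the bialgebra axioms for $A \ltimes H$ on the generating set $T = \{a \ltimes 1_{H} \mid a \in A\} \cup \{1_{A} \ltimes g \mid g \in H\}$ identified in the paragraph preceding the theorem. Each of the nine compatibility conditions will correspond to one specific bialgebra axiom applied to a particular family of generators, with the cross-relations \equref{13}--\equref{16} together with the coalgebra map conditions \equref{6}--\equref{4} acting as the engine performing the simplifications.

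For $(1) \Rightarrow (2)$, I would first extract the coalgebra-compatibility conditions $(2a)$, $(2h)$ and $(2i)$ from the hypothesis that $\Delta_{A\ltimes H}$ and $\varepsilon_{A\ltimes H}$ are algebra maps. Applying $\Delta_{A\ltimes H}$ to the cross-relation \equref{14} and comparing with $\Delta(1_A \ltimes g) \bullet \Delta(1_A \ltimes h)$, then isolating the $A$- and $H$-tensor components, produces both the multiplicativity part of $(2a)$ and condition $(2i)$; the same procedure applied to \equref{15} gives $(2h)$. The unital parts of $(2a)$ follow from $\varepsilon_{A\ltimes H}$ being multiplicative. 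The associativity-type conditions $(2b)$--$(2g)$ then emerge from associativity of $\bullet$ tested on triples in $T$. By \equref{13} and \equref{16} the element $1_A \ltimes 1_H$ is a two-sided unit, and the triple with all factors of type $a \ltimes 1_H$ reduces to associativity in $A$; only the truly mixed triples carry new information. Concretely, $(1_A \ltimes g) \bullet (a \ltimes 1_H) \bullet (b \ltimes 1_H)$ yields both $(2b)$ and $(2d)$ after applying $\varepsilon_A$ or $\varepsilon_H$ to the appropriate tensorand; $(1_A \ltimes g) \bullet (1_A \ltimes h) \bullet (a \ltimes 1_H)$ delivers $(2e)$ on the $H$-component and the twisted-module-type condition $(2f)$ on the $A$-component; and $(1_A \ltimes g) \bullet (1_A \ltimes h) \bullet (1_A \ltimes l)$ splits into $(2c)$ on the $H$-tensorand and the cocycle-type condition $(2g)$ on the $A$-tensorand.

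For the converse $(2) \Rightarrow (1)$ the roles reverse. Since $T$ generates $A \ltimes H$ as an algebra, multiplicativity of $\Delta_{A \ltimes H}$ and $\varepsilon_{A \ltimes H}$ reduces, beyond the trivial $A$-case inherited from the bialgebra structure of $A$, to checking these on the cross-products \equref{14} and \equref{15}; the required identities are precisely what $(2a)$, $(2i)$, $(2h)$, together with the coalgebra map property of $\triangleleft$, $\triangleright$, $f$, guarantee. Associativity of $\bullet$ similarly reduces to the mixed triples enumerated above, and the compatibilities $(2b)$--$(2g)$ each supply precisely one of them. The unit axiom is immediate from the normalization conditions \equref{2}--\equref{3}.

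The main obstacle is the bookkeeping in the triple $(1_A \ltimes g) \bullet (1_A \ltimes h) \bullet (1_A \ltimes l)$: expanding the associator via \equref{14} produces an $f$-within-$f$ expression requiring \equref{4} together with $(2i)$ to disentangle, before the $A$-component recognizes $(2g)$ and the $H$-component recognizes $(2c)$. The triple producing $(2f)$ is similarly delicate, as it simultaneously mixes all three structure maps $\triangleleft$, $\triangleright$, $f$, and requires $(2h)$ to align the Sweedler indices on the two sides.
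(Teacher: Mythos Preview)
Your proposal is correct and follows essentially the same route as the paper: test the bialgebra axioms on the generating set $T$, extract $(2a)$, $(2h)$, $(2i)$ from multiplicativity of $\Delta_{A\ltimes H}$ and $\varepsilon_{A\ltimes H}$ on the products \equref{14}--\equref{15}, and obtain $(2b)$--$(2g)$ from associativity on exactly the three mixed triples you list (with $(2h)$ and $(2i)$ needed in the converse direction to reassemble the $(2e)$--$(2f)$ and $(2c)$--$(2g)$ pairs, respectively). The only cosmetic discrepancy is your phrase ``unital parts of $(2a)$'' for the $\varepsilon_H$-multiplicativity; the paper also verifies explicitly that associativity holds automatically when the leftmost factor is $a\ltimes 1_H$ or in the triple $(1_A\ltimes g)\bullet(a\ltimes 1_H)\bullet(1_A\ltimes h)$, which you leave implicit.
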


Before going into the proof of the theorem, we have a few
observations on the relations $(2a)-(2i)$ in \thref{1}. Although
they look rather complicated at first sight, they are in fact
quite natural and can be interpreted as follows: (2a) and (2b)
show that $(H, \Delta_{H}, \varepsilon_{H}, 1_{H}, \cdot )$ is a
non-associative bialgebra and a right $A$-module coalgebra via
$\lhd$. (2c) measures how far $(H, 1_H, \cdot)$ is from being an
associative algebra. (2d), (2e) and (2h) are exactly,
mutatis-mutandis, the compatibility conditions \equref{mp1} -
\equref{mp4} appearing in the definition of a matched pair of
bialgebras. (2f) and (2g) are deformations via the action
$\triangleleft$ of the twisted module condition $(5)$ and
respectively of the cocycle condition \equref{cc} which appears in
the definition of the crossed product for Hopf algebras. (2i) is a
symmetry condition for the cocycle $f$ similar to $(2h)$. Both
relations are trivially fulfilled if, for example, $H$ is
cocommutative or $f$ is the trivial cocycle.

\begin{proof}
We prove \thref{1} in several steps. From \equref{13} and
\equref{15} it is straightforward that $1_{A} \ltimes 1_{H}$ is a
unit for the algebra $(A \ltimes H, \bullet)$. Next, we prove that
$\varepsilon_{A \ltimes H}$ given by \equref{12} is an algebra map
if and only if $\varepsilon_{H}:H \rightarrow k$ is an algebra
map. For $h, g \in H$ we have:
\begin{eqnarray*}\varepsilon_{A \ltimes H}\bigl((1_{A} \ltimes h)
\bullet (1_{A} \ltimes g)\bigl) &\stackrel{\equref{14}} {=}&
\varepsilon_{A \ltimes
H}\bigl(f(h_{(1)},g_{(1)}) \ltimes g_{(2)}\cdot h_{(2)}\bigl)\\
&\stackrel{\equref{4}} {=}&
\varepsilon_{H}(h_{(1)})\varepsilon_{H}(g_{(1)})\varepsilon_{H}(g_{(2)}\cdot
h_{(2)})\\
&{=}& \varepsilon_{H}(g \cdot h)
\end{eqnarray*}
and $\varepsilon(1_{A} \ltimes h) \varepsilon(1_{A} \ltimes g) =
\varepsilon_{H}(h) \varepsilon_{H}(g)$. Thus, if $\varepsilon_{A
\ltimes H}$ is a $k$-algebra map then $\varepsilon_{H}$ is a
$k$-algebra map. Conversely, suppose that $\varepsilon_{H}$ is a
$k$-algebra map. Then, we have:
\begin{eqnarray*}
\varepsilon_{A \ltimes H}\bigl((a \ltimes h) \bullet (c \ltimes
g)\bigl) &{=}&
\varepsilon_{A}(a)\varepsilon_{H}(h_{(1)})\varepsilon_{A}(c_{(1)})
\varepsilon_{H}(h_{(2)})\varepsilon_{A}(c_{(2)})\varepsilon_{H}
(g_{(1)})\varepsilon_{H}(h_{(3)})\\
&&\varepsilon_{A}(c_{(3)})\varepsilon_{H}(g_{(2)})\\
&{=}&\varepsilon_{A}(a)\varepsilon_{H}(h)\varepsilon_{A}(c)\varepsilon_{H}(g)\\
&{=}&\varepsilon_{A \ltimes H}(a \ltimes h) \varepsilon_{A \ltimes
H}(c \ltimes g)
\end{eqnarray*}
for all $a$, $c\in A$ and $h\in H$ i.e. $\varepsilon_{A \ltimes
H}$ is an algebra map.

The next step is to prove that $\Delta_{A \ltimes H}$ is a
$k$-algebra map if and only if $\Delta_{H}:H \rightarrow H \otimes
H$ is a $k$-algebra map and the relations $(2h)$, $(2i)$ hold.
Observe that $\Delta_{A \ltimes H} (1_{A} \ltimes 1_{H})
\stackrel{\equref{1}} {=} 1_{A} \ltimes 1_{H} \otimes 1_{A}
\ltimes 1_{H}$. Since $T = \{a \ltimes 1_{H} ~|~ a \in A\} \cup
\{1_{A} \ltimes g ~|~ g \in H\}$ generates $A \ltimes H$ as an
algebra, $\Delta_{A \ltimes H}$ is a $k$-algebra map if and only
if $\Delta_{A \ltimes H}(xy) = \Delta_{A \ltimes H}(x) \Delta_{A
\ltimes H}(y)$ for all $x, y \in T$. First, observe that:
\begin{eqnarray*}
\Delta_{A \ltimes H}\bigl((a \ltimes 1_{H}) \bullet (b \ltimes
1_{H}) \bigl)&{=}& \Delta_{A \ltimes H}(ab \ltimes 1_{H})\\
&{=}& a_{(1)}b_{(1)} \ltimes 1_{H} \otimes a_{(2)}b_{(2)} \ltimes
1_{H}\\
&{=}& \bigl(a_{(1)} \ltimes 1_{H} \otimes a_{(2)} \ltimes
1_{H}\bigl) \bigl(b_{(1)} \ltimes 1_{H} \otimes b_{(2)} \ltimes
1_{H}\bigl)\\
&{=}&\Delta_{A \ltimes H}(a \ltimes 1_{H})\Delta_{A \ltimes H}(b
\ltimes 1_{H})
\end{eqnarray*}
and
\begin{eqnarray*}
\Delta_{A \ltimes H}\bigl((a \ltimes 1_{H}) \bullet (1_{A} \ltimes
g) \bigl)&{=}& \Delta_{A \ltimes H}(a \ltimes g)\\
&{=}& a_{(1)} \ltimes g_{(1)} \otimes a_{(2)} \ltimes
g_{(2)}\\
&{=}& \bigl(a_{(1)} \ltimes 1_{H} \otimes a_{(2)} \ltimes
1_{H}\bigl) \bigl(1_{A} \ltimes g_{(1)} \otimes 1_{A} \ltimes
g_{(2)}\bigl)\\
&{=}& \Delta_{A \ltimes H}(a \ltimes 1_{H})\Delta_{A \ltimes
H}(1_{A} \ltimes g)
\end{eqnarray*}
for all $a, b \in A$, $g \in H$. There are two more relations to
consider; for $g, h \in H$ we have:
\begin{eqnarray*}
\Delta_{A \ltimes H}\bigl((1_{A} \ltimes g) \bullet (1_{A} \ltimes
h) \bigl)&\stackrel{\equref{14}} {=}&
\Delta_{A \ltimes H}\bigl(f(g_{(1)},h_{(1)}) \ltimes g_{(2)} \cdot h_{(2)}\bigl)\\
&\stackrel{\equref{4}} {=}& f(g_{(1)(1)},h_{(1)(1)}) \ltimes
(g_{(2)} \cdot h_{(2)})_{(1)} \otimes \\
&& f(g_{(1)(2)},h_{(1)(2)}) \ltimes (g_{(2)} \cdot
h_{(2)})_{(2)}\\
&{=}& f(g_{(1)},h_{(1)}) \ltimes
(g_{(3)} \cdot h_{(3)})_{(1)} \otimes \\
&& f(g_{(2)},h_{(2)}) \ltimes (g_{(3)} \cdot h_{(3)})_{(2)}
\end{eqnarray*}
and
\begin{eqnarray*}
\Delta_{A \ltimes H}(1_{A} \ltimes g) \Delta_{A \ltimes H}(1_{A}
\ltimes h)&{=}&\bigl(1_{A} \ltimes g_{(1)} \otimes 1_{A} \ltimes
g_{(2)}\bigl) \bigl(1_{A} \ltimes h_{(1)} \otimes 1_{A} \ltimes
h_{(2)}\bigl)\\
&\stackrel{\equref{14}} {=}&f(g_{(1)},h_{(1)}) \ltimes g_{(2)}
\cdot h_{(2)} \otimes f(g_{(3)},h_{(3)}) \ltimes g_{(4)} \cdot
h_{(4)}
\end{eqnarray*}
Thus $\Delta_{A \ltimes H}\bigl((1_{A} \ltimes g) \bullet (1_{A}
\ltimes h) = \Delta_{A \ltimes H}(1_{A} \ltimes g) \Delta_{A
\ltimes H}(1_{A} \ltimes h)$ if and only if
\begin{eqnarray*}\eqlabel{17}
f(g_{(1)}, \, h_{(1)}) \ltimes (g_{(3)} \cdot h_{(3)})_{(1)}
\otimes
f(g_{(2)}, \, h_{(2)}) \ltimes (g_{(3)} \cdot h_{(3)})_{(2)} = \\
 = f(g_{(1)}, h_{(1)}) \ltimes g_{(2)} \cdot h_{(2)} \otimes
f(g_{(3)}, h_{(3)}) \ltimes g_{(4)} \cdot h_{(4)}
\end{eqnarray*}
We show now that this relation holds if and only if $\Delta_{H}:H
\rightarrow H \otimes H$ is a $k$-algebra map and $(2i)$ holds.
Indeed, suppose first that the above relation holds. By applying
$\varepsilon_A \otimes Id \otimes \varepsilon_A \otimes Id$ to it
we obtain $\Delta_{H}(g \cdot h) = g_{(1)} \cdot h_{(1)} \otimes
g_{(2)} \cdot h_{(2)}$, i.e. $\Delta_{H}$ is a $k$-algebra map.
Furthermore, if we apply $\varepsilon_A \otimes Id \otimes Id
\otimes \varepsilon_H$ to it we obtain $g_{(1)} \cdot h_{(1)}
\otimes f(g_{(2)},h_{(2)}) = g_{(2)} \cdot h_{(2)} \otimes
f(g_{(1)},h_{(1)})$, i.e. $(2i)$. Conversely, suppose that
$\Delta_{H}$ is a $k$-algebra map and $(2i)$ holds. We then have:
\begin{eqnarray*}
&&f(g_{(1)}, \, h_{(1)}) \ltimes (g_{(3)} \cdot h_{(3)})_{(1)}
\otimes
f(g_{(2)}, \, h_{(2)}) \ltimes (g_{(3)} \cdot h_{(3)})_{(2)} = \\
&{=}& f(g_{(1)}, \, h_{(1)}) \ltimes g_{(3)} \cdot h_{(3)} \otimes
f(g_{(2)}, \, h_{(2)}) \ltimes g_{(4)} \cdot h_{(4)}\\
&{=}& f(g_{(1)}, \, h_{(1)}) \ltimes \underline{g_{(2)(2)} \cdot
h_{(2)(2)} \otimes f(g_{(2)(1)}, \, h_{(2)(1)})} \ltimes g_{(3)}
\cdot
h_{(3)}\\
& \stackrel{(2i)} {=}& f(g_{(1)}, \, h_{(1)}) \ltimes
g_{(2)(1)}\cdot h_{(2)(1)} \otimes f(g_{(2)(2)}, \, h_{(2)(2)})
\ltimes g_{(3)} \cdot
h_{(3)}\\
&{=}& f(g_{(1)}, \, h_{(1)}) \ltimes g_{(2)} \cdot h_{(2)} \otimes
f(g_{(3)}, \, h_{(3)}) \ltimes g_{(4)} \cdot h_{(4)}
\end{eqnarray*}
as needed. To end with, for the last family of generators we have:
\begin{eqnarray*}
\Delta_{A \ltimes H}\bigl((1_{A} \ltimes g) \bullet (a \ltimes
1_{H})\bigl) &{\stackrel{\equref{15}}=}& \Delta_{A \ltimes
H}(g_{(1)}
\triangleright a_{(1)} \ltimes g_{(2)} \triangleleft a_{(2)})\\
&{\stackrel{\equref{6},\equref{8}}=}& g_{(1)} \triangleright
a_{(1)} \ltimes g_{(3)} \triangleleft a_{(3)} \otimes g_{(2)}
\triangleright a_{(2)} \ltimes g_{(4)} \triangleleft a_{(4)}
\end{eqnarray*}
and
\begin{eqnarray*}
\Delta_{A \ltimes H}(1_{A} \ltimes g)\Delta_{A \ltimes H}(a
\ltimes 1_{H}) &{=}& \bigl(1_{A} \ltimes g_{(1)} \otimes 1_{A}
\ltimes g_{(2)}\bigl) \bigl(a_{(1)} \ltimes 1_{H} \otimes a_{(2)}
\ltimes 1_{H}\bigl)\\
&{\stackrel{\equref{15}}=}& g_{(1)} \triangleright a_{(1)} \ltimes
g_{(2)} \triangleleft a_{(2)} \otimes g_{(3)} \triangleright
a_{(3)} \ltimes g_{(4)} \triangleleft a_{(4)}
\end{eqnarray*}
Thus $\Delta_{A \ltimes H} \bigl( (1_{A} \ltimes g) \bullet (a
\ltimes 1_{H}) \bigl) = \Delta_{A \ltimes H}(1_{A} \ltimes
g)\Delta_{A \ltimes H}(a \ltimes 1_{H})$ if and only if
\begin{eqnarray*}\eqlabel{18}
g_{(1)} \triangleright a_{(1)} \ltimes g_{(3)} \triangleleft
a_{(3)} \otimes g_{(2)} \triangleright a_{(2)} \ltimes g_{(4)}
\triangleleft a_{(4)} = \\
= g_{(1)} \triangleright a_{(1)} \ltimes g_{(2)} \triangleleft
a_{(2)} \otimes g_{(3)} \triangleright a_{(3)} \ltimes g_{(4)}
\triangleleft a_{(4)}
\end{eqnarray*}
This relation is equivalent to the compatibility condition $(2h)$:
indeed, by applying $\varepsilon_A \otimes Id \otimes Id \otimes
\varepsilon_H$ to it we obtain $(2h)$. Conversely suppose that
$(2h)$ holds. Then:
\begin{eqnarray*}
&& g_{(1)} \triangleright a_{(1)} \ltimes \,\, \underline{g_{(3)}
\triangleleft a_{(3)} {\otimes} g_{(2)} \triangleright a_{(2)}}
\ltimes g_{(4)} \triangleleft a_{(4)} = \\
&& \stackrel{(2h)} {=} g_{(1)} \triangleright a_{(1)} \ltimes \,\,
g_{(2)} \triangleleft a_{(2)} \otimes g_{(3)} \triangleright
a_{(3)} \ltimes g_{(4)} \triangleleft a_{(4)}
\end{eqnarray*}
as needed.

To resume, we proved until now that $\Delta_{A \ltimes H}$ and
$\varepsilon_{A \ltimes H}$ are $k$-algebra maps if and only if
the relations $(2a)$, $(2h)$, $(2i)$ hold. In what follows we
shall prove, in the hypothesis that $\Delta_{A \ltimes H}$ and
$\varepsilon_{A \ltimes H}$ are $k$-algebra maps, that the
multiplication given by \equref{10} is associative if and only if
the compatibility conditions $(2b)$-$(2g)$ hold. This will end the
proof. We make use again of the fact that $T$ generates $A \ltimes
H$ as an algebra. Thus $\bullet$ is associative if and only if $x
\bullet (y \bullet z) = (x \bullet y) \bullet z$, for all $x$,
$y$, $z \in T$. To start with, we will prove that:
$$
(a \ltimes 1_{H}) \bullet (y \bullet z)
= [(a \ltimes 1_{H}) \bullet y] \bullet z
$$
for all $a \in A$ and $y$, $z \in T$. Indeed, we have:
\begin{eqnarray*}
(a \ltimes 1_{H}) \bullet \Bigl((1_{A} \ltimes g) \bullet (b
\ltimes 1_{H})\Bigl)&\stackrel{\equref{15}} {=}& (a \ltimes 1_{H})
\bullet (g_{(1)} \triangleright b_{(1)} \ltimes g_{(2)}
\triangleleft b_{(2)})\\
&\stackrel{\equref{13}} {=}& a(g_{(1)} \triangleright b_{(1)})
\ltimes (g_{(2)} \triangleleft b_{(2)}))\\
&\stackrel{\equref{15}} {=}& (a \ltimes g) \bullet (b \ltimes
1_{H})\\
&{=}& \Bigl((a \ltimes 1_{H}) \bullet (1_{A} \ltimes g)\Bigl)
\bullet (b \ltimes 1_{H})
\end{eqnarray*}
and
\begin{eqnarray*}
(a \ltimes 1_{H}) \bullet \Bigl((1_{A} \ltimes g) \bullet (1_{A}
\ltimes h)\Bigl)&\stackrel{\equref{14}} {=}& (a \ltimes 1_{H})
\bullet (f(g_{(1)},h_{(1)}) \ltimes g_{(2)}\cdot h_{(2)})\\
&\stackrel{\equref{13}} {=}& af(g_{(1)},h_{(1)}) \ltimes
g_{(2)}\cdot h_{(2)}\\
&\stackrel{\equref{14}} {=}& (a \ltimes g) \bullet (1_{A} \ltimes
h)\\
&\stackrel{\equref{13}} {=}&\Bigl((a \ltimes 1_{H}) \bullet (1_{A}
\ltimes g)\Bigl) \bullet (1_{A} \ltimes h)
\end{eqnarray*}
The other two possibilities for choosing the elements of $T$ can
also be proven by a straightforward computation. Thus $\bullet$ is
associative if and only if $(1_A \ltimes g) \bullet (y \bullet z)
= [(1_A \ltimes g) \bullet y] \bullet z$, for all $g \in H$, $y$,
$z \in T$. First we note that:
\begin{eqnarray*}
(1_{A} \ltimes g) \bullet \Bigl((a \ltimes 1_{H}) \bullet (1_{A}
\ltimes h)\Bigl) &{=}& (1_{A} \ltimes g) \bullet (a \ltimes h)\\
&{=}& (g_{(1)} \triangleright a_{(1)})f(g_{(2)} \triangleleft
a_{(2)}, h_{(1)}) \ltimes (g_{(3)} \triangleleft a_{(3)}) \cdot
h_{(2)}\\
&\stackrel{\equref{14}} {=}& (g_{(1)} \triangleright a_{(1)}
\ltimes g_{(2)} \triangleleft a_{(2)}) \bullet (1_{A} \ltimes h)\\
&\stackrel{\equref{15}} {=}& \Bigl((1_{A} \ltimes g) \bullet (a
\ltimes 1_{H})\Bigl) \bullet (1_{A} \ltimes h)
\end{eqnarray*}
On the other hand:
\begin{eqnarray*}
(1_{A} \ltimes g) \bullet \Bigl((b \ltimes 1_{H}) \bullet (c
\ltimes 1_{H})\Bigl) &{=}& (1_{A} \ltimes g) \bullet (bc \ltimes
1_{H})\\
&\stackrel{\equref{15}} {=}& g_{(1)} \triangleright
(b_{(1)}c_{(1)}) \ltimes g_{(2)} \triangleleft (b_{(2)}c_{(2)})
\end{eqnarray*}
and
\begin{eqnarray*}
\Bigl((1_{A} \ltimes g) \bullet (b \ltimes 1_{H})\Bigl) \bullet (c
\ltimes 1_{H})&\stackrel{\equref{15}} {=}& (g_{(1)} \triangleright
b_{(1)} \ltimes g_{(2)} \triangleleft b_{(2)}) \bullet (c \ltimes
1_{H})\\
&\stackrel{\equref{15}} {=}& (g_{(1)} \triangleright
b_{(1)})\bigl((g_{(2)} \triangleleft b_{(2)}) \triangleright
c_{(1)} \bigl) \ltimes (g_{(3)} \triangleleft b_{(3)})
\triangleleft c_{(2)}
\end{eqnarray*}
Hence $(1_{A} \ltimes g) \bullet \Bigl((b \ltimes 1_{H}) \bullet
(c \ltimes 1_{H})\Bigl) = \Bigl((1_{A} \ltimes g) \bullet (b
\ltimes 1_{H})\Bigl) \bullet (c \ltimes 1_{H})$ if and only if
\begin{equation}\eqlabel{19}
g_{(1)} \triangleright (b_{(1)}c_{(1)}) \ltimes g_{(2)}
\triangleleft (b_{(2)}c_{(2)}) = (g_{(1)} \triangleright
b_{(1)})\bigl((g_{(2)} \triangleleft b_{(2)}) \triangleright
c_{(1)} \bigl) \ltimes (g_{(3)} \triangleleft b_{(3)})
\triangleleft c_{(2)}
\end{equation}
for all $b, c \in A$ and $g \in H$. We show now that the relation
\equref{19} is equivalent to the compatibility conditions $(2b)$
and $(2d)$. Indeed, by applying $\varepsilon_A \otimes Id$ and
respectively $Id \otimes \varepsilon_H$ in \equref{19} we obtain
relations $(2b)$ respectively $(2d)$. Conversely, suppose that
relations $(2b)$ and $(2d)$ hold. We then have:
\begin{eqnarray*}
&& (g_{(1)} \triangleright b_{(1)})\bigl((g_{(2)} \triangleleft
b_{(2)}) \triangleright c_{(1)} \bigl) \ltimes (g_{(3)}
\triangleleft b_{(3)}) \triangleleft c_{(2)} = \\
& = & (g_{(1)(1)} \triangleright b_{(1)(1)})\bigl((g_{(1)(2)}
\triangleleft b_{(1)(2)}) \triangleright c_{(1)} \bigl) \ltimes
(g_{(2)} \triangleleft b_{(2)})
\triangleleft c_{(2)}\\
&\stackrel{(2d)} {=}& g_{(1)} \triangleright (b_{(1)}c_{(1)})
\otimes (g_{(2)} \triangleleft b_{(2)}) \triangleleft c_{(2)}\\
&\stackrel{(2b)} {=}& g_{(1)} \triangleright (b_{(1)}c_{(1)})
\otimes g_{(2)} \triangleleft (b_{(2)} c_{(2)})
\end{eqnarray*}
i.e. \equref{19} holds. Now we deal with the last two cases. Since
$\triangleright$ is a coalgebra map we obtain:
\begin{eqnarray*}
(1_{A} \ltimes g) \bullet \Bigl((1_{A} \ltimes h) \bullet (a
\ltimes 1_{H})\Bigl)&\stackrel{\equref{15}} {=}& (1_{A} \ltimes g)
\bullet (h_{(1)} \triangleright a_{(1)} \ltimes h_{(2)}
\triangleleft a_{(2)})\\
&\stackrel{\equref{8}} {=}& \Bigl(g_{(1)} \triangleright (h_{(1)}
\triangleright a_{(1)})\Bigl)f(g_{(2)} \triangleleft (h_{(2)}
\triangleright a_{(2)}), \, h_{(4)} \triangleleft a_{(4)})\\
&& \ltimes [g_{(3)} \triangleleft (h_{(3)} \triangleright
a_{(3)})]\cdot (h_{(5)} \triangleleft a_{(5)})
\end{eqnarray*}
and
\begin{eqnarray*}
\Bigl((1_{A} \ltimes g) \bullet (1_{A} \ltimes h)\Bigl) \bullet (a
\ltimes 1_{H})&\stackrel{\equref{14}} {=}& \bigl(f(g_{(1)}, \,
h_{(1)}) \ltimes g_{(2)} \cdot h_{(2)}\bigl)
\bullet (a \ltimes 1_{H})\\
&\stackrel{\equref{15}} {=}& f(g_{(1)}, \, h_{(1)})[(g_{(2)} \cdot
h_{(2)}) \triangleright a_{(1)}] \ltimes (g_{(3)} \cdot h_{(3)})
\triangleleft a_{(2)}
\end{eqnarray*}
Thus $(1_{A} \ltimes g) \bullet \Bigl((1_{A} \ltimes h) \bullet (a
\ltimes 1_{H})\Bigl) = \Bigl((1_{A} \ltimes g) \bullet (1_{A}
\ltimes h)\Bigl) \bullet (a \ltimes 1_{H})$ if and only if
\begin{eqnarray*}\eqlabel{20}
&\Bigl(g_{(1)} \triangleright (h_{(1)} \triangleright
a_{(1)})\Bigl)f \Bigl ( g_{(2)} \triangleleft (h_{(2)}
\triangleright a_{(2)}), \, h_{(4)} \triangleleft a_{(4)} \Bigl)
\ltimes [g_{(3)} \triangleleft (h_{(3)} \triangleright
a_{(3)})]\cdot (h_{(5)} \triangleleft a_{(5)}) \\
& =  f(g_{(1)}, \, h_{(1)})[(g_{(2)} \cdot h_{(2)}) \triangleright
a_{(1)}] \ltimes (g_{(3)} \cdot h_{(3)}) \triangleleft a_{(2)}
\end{eqnarray*}
We shall prove, using $(2h)$, that this relation is equivalent to
the compatibility conditions $(2e)$ and $(2f)$. Indeed, by
applying $Id \otimes \varepsilon_H$ and respectively
$\varepsilon_A \otimes Id$ to it we obtain $(2e)$ and respectively
$(2f)$. Conversely, suppose that relations $(2e)$ respectively
$(2f)$ hold. We denote ${\rm LHS}$ the left hand side of the above
relation. We have:
\begin{eqnarray*}
{\rm LHS} &{=}& \bigl(g_{(1)} \triangleright (h_{(1)}
\triangleright a_{(1)})\bigl) f \bigl(g_{(2)} \triangleleft
(h_{(2)} \triangleright
a_{(2)}), \, h_{(3)(2)} \triangleleft a_{(3)(2)} \bigl) \ltimes\\
&&[g_{(3)} \triangleleft (h_{(3)(1)} \triangleright
a_{(3)(1)})]\cdot
(h_{(4)} \triangleleft a_{(4)})\\
&\stackrel{(2h)} {=}& \Bigl(\underline{g_{(1)} \triangleright
(h_{(1)} \triangleright a_{(1)})\Bigl)f(g_{(2)} \triangleleft
(h_{(2)} \triangleright a_{(2)}), \, h_{(3)} \triangleleft
a_{(3)}})
\ltimes \\
&&[g_{(3)} \triangleleft (h_{(4)} \triangleright a_{(4)})]\cdot
(h_{(5)} \triangleleft a_{(5)})\\
&\stackrel{(2f)} {=}& f(g_{(1)}, \, h_{(1)})[(g_{(2)} \cdot
h_{(2)}) \triangleright a_{(1)}] \otimes [g_{(3)} \triangleleft
(h_{(3)}
\triangleright a_{(2)})]\cdot (h_{(4)} \triangleleft a_{(3)})\\
&\stackrel{(2e)} {=} & f(g_{(1)}, \, h_{(1)})[(g_{(2)} \cdot
h_{(2)}) \triangleright a_{(1)}] \otimes (g_{(3)} \cdot h_{(3)})
\triangleleft a_{(2)}
\end{eqnarray*}
as needed. Only one associativity relation remains to be verified:
\begin{eqnarray*}
(1_{A} \ltimes g) \bullet \Bigl((1_{A} \ltimes h) \bullet (1_{A}
\ltimes l)\Bigl) &\stackrel{\equref{14}} {=}& (1_{A} \ltimes g)
\bullet (f(h_{(1)}, \, l_{(1)}) \ltimes h_{(2)} \cdot l_{(2)})\\
&\stackrel{\equref{4}} {=}& \bigl(g_{(1)} \triangleright
f(h_{(1)}, l_{(1)})\bigl)f\bigl(g_{(2)} \triangleleft
f(h_{(2)}, l_{(2)}), \, h_{(4)} \cdot l_{(4)}\bigl) \\
&&\ltimes \bigl(g_{(3)} \triangleleft f(h_{(3)}, \, l_{(3)})\bigl)
\cdot (h_{(5)} \cdot l_{(5)})
\end{eqnarray*}
and
\begin{eqnarray*}
\Bigl((1_{A} \ltimes g) \bullet (1_{A} \ltimes h)\Bigl) \bullet
(1_{A} \ltimes l) &\stackrel{\equref{14}} {=}& \bigl(f(g_{(1)}, \,
h_{(1)}) \ltimes g_{(2)} \cdot h_{(2)}\bigl)
\bullet (1_{A} \ltimes l)\\
&\stackrel{\equref{14}} {=}& f(g_{(1)}, \, h_{(1)})f\bigl(g_{(2)}
\cdot h_{(2)}, \, l_{(1)}\bigl) \ltimes (g_{(3)} \cdot h_{(3)})
\cdot l_{(2)}
\end{eqnarray*}
Hence $(1_{A} \ltimes g) \bullet \Bigl((1_{A} \ltimes h) \bullet
(1_{A} \ltimes l)\Bigl) = \Bigl((1_{A} \ltimes g) \bullet (1_{A}
\ltimes h)\Bigl) \bullet (1_{A} \ltimes l)$ if and only if
\begin{eqnarray*}\eqlabel{21}
& \bigl(g_{(1)} \triangleright
f(h_{(1)},l_{(1)})\bigl)f\Bigl(g_{(2)} \triangleleft
f(h_{(2)},l_{(2)}),h_{(4)} \cdot l_{(4)}\Bigl) \otimes
\bigl(g_{(3)} \triangleleft f(h_{(3)},l_{(3)})\bigl) \cdot
(h_{(5)} \cdot l_{(5)}) = \\
& = f(g_{(1)},h_{(1)})f\bigl(g_{(2)} \cdot h_{(2)},l_{(1)}\bigl)
\otimes (g_{(3)} \cdot h_{(3)}) \cdot l_{(2)}
\end{eqnarray*}
for all $g, h, l \in H$. We shall prove, using $(2i)$, that this
relation is equivalent to the compatibility conditions $(2c)$ and
$(2g)$. Indeed, by applying $Id \otimes \varepsilon_H$ and
respectively $\varepsilon_A \otimes Id$ to it we obtain $(2c)$ and
respectively $(2g)$. Conversely, suppose that relations $(2c)$ and
$(2g)$ hold and denote ${\rm LHS'}$ the left hand side of the
above relation. Then:
\begin{eqnarray*}
{\rm LHS'} &{=}& \bigl(g_{(1)} \triangleright f(h_{(1)}, \,
l_{(1)})\bigl) f\Bigl(g_{(2)} \triangleleft
f(h_{(2)}, \, l_{(2)}), \underline{h_{(3)(2)} \cdot l_{(3)(2)}}\Bigl) \otimes\\
&&\bigl(g_{(3)} \triangleleft \underline{f(h_{(3)(1)}, \,
l_{(3)(1)})}\bigl) \cdot (h_{(4)} \cdot
l_{(4)})\\
&\stackrel{(2i)} {=}& \bigl(\underline{g_{(1)} \triangleright
f(h_{(1)}, \, l_{(1)})\bigl) f\Bigl(g_{(2)} \triangleleft
f(h_{(2)}, \, l_{(2)}), \, h_{(3)} \cdot l_{(3)}}\Bigl) \\
&&\otimes \bigl(g_{(3)} \triangleleft f(h_{(4)}, \, l_{(4)})\bigl)
\cdot (h_{(5)} \cdot l_{(5)})\\
&\stackrel{(2g)} {=}& f(g_{(1)}, \, h_{(1)})f(g_{(2)} \cdot
h_{(2)},l_{(1)}) \otimes \bigl(g_{(3)} \triangleleft
f(h_{(3)}, \, l_{(2)})\bigl) \cdot (h_{(4)} \cdot l_{(3)})\\
&{=}& f(g_{(1)}, \, h_{(1)})f(g_{(2)} \cdot h_{(2)}, \, l_{(1)})
\otimes \underline{\bigl(g_{(3)} \triangleleft f(h_{(3)(1)}, \,
l_{(2)(1)})\bigl) \cdot (h_{(3)(2)} \cdot
l_{(2)(2)})}\\
&\stackrel{(2c)} {=}& f(g_{(1)}, \, h_{(1)})f(g_{(2)} \cdot
h_{(2)}, \, l_{(1)}) \otimes (g_{(3)} \cdot h_{(3)}) \cdot l_{(2)}
\end{eqnarray*}
as needed and the proof is now finished.
\end{proof}

\begin{examples}\exlabel{3exemple}
1. Let $A$ be a bialgebra and $\Omega(A) = \bigl(H, \triangleleft,
\triangleright, f \bigl)$ an extending datum of $A$ such that the
cocycle $f$ is trivial, that is $f (g, \, h) = \varepsilon_H (g)
\varepsilon_H (h) 1_A$, for all $g$, $h\in H$.

Then $\Omega(A) = \bigl(H, \triangleleft, \triangleright, f
\bigl)$ is a bialgebra extending structure of $A$ if and only if
$H$ is a bialgebra and $(A, H, \triangleleft, \triangleright)$ is
a matched pair of bialgebras. In this case, the associated unified
product $A\ltimes H = A\bowtie H$ is the bicrossed product of
bialgebras constructed in \equref{0010}.

Conversely, a matched pair of bialgebras can be deformed using a
coalgebra lazy cocycle in order to obtain a bialgebra extending
structure as follows. Let $(A, H, \triangleleft, \triangleright)$
be a matched pair of bialgebras such that $A$ has antipode $S_A$
and $u: H \to A$ a coalgebra lazy $1$-cocycle in the sense of
\deref{lazydef} such that $h \triangleleft u(g) = h
\varepsilon_H(g)$, for all $h\in H$ and $g\in G$. Then $\Omega(A)
= \bigl(H, \triangleleft, \triangleright', f' \bigl)$ is a
bialgebra extending structure of $A$, where $\triangleright'$ and
$f'$ are given by
\begin{eqnarray*}
h\triangleright' c &{=}& u(h_{(1)}) (h_{(2)} \triangleright
c_{(1)}) S_{A}\Bigl(u \bigl(h_{(3)} \triangleleft
c_{(2)}\bigl)\Bigl)  \eqlabel{C2} \\
f'(h , \, g) &{=}& u(h_{(1)})(h_{(2)} \triangleright u(g_{(1)}))
S_{A}\Bigl(u\bigl(h_{(3)} g_{(2)}\bigl)\Bigl)
\end{eqnarray*}
for all $h$, $g\in H$ and $c\in A$.

2. Let $A$ be a bialgebra and $\Omega(A) = \bigl(H, \triangleleft,
\, \triangleright, \, f \bigl)$ an extending datum of $A$ such
that the action $\lhd$ is trivial, that is $h \lhd a =
\varepsilon_A (a) h$, for all $h\in H$ and $a\in A$.

Then $\Omega(A) = \bigl(H, \triangleleft, \, \triangleright, \, f
\bigl)$ is a bialgebra extending structure of $A$ if and only if
$H$ is an usual bialgebra and the following compatibility
conditions are fulfilled:
\begin{enumerate}
\item[(a)] The twisted module condition $(5)$ and the cocycle
condition \equref{cc} hold;

\item[(b)] $g \triangleright (ab) = (g_{(1)} \triangleright a)
(g_{(2)} \triangleright b)$

\item[(c)] $g_{(1)} \otimes g_{(2)} \triangleright a = g_{(2)}
\otimes g_{(1)} \triangleright a$

\item[(d)] $g_{(1)} h_{(1)} \otimes f(g_{(2)}, \, h_{(2)}) =
g_{(2)} h_{(2)} \otimes f(g_{(1)}, \, h_{(1)})$
\end{enumerate}
for all $g$, $h \in H$ and $a$, $b \in A$.

In this case, the associated unified product $A\ltimes H = A
\#_{f} \, H$ is the crossed product constructed in \equref{001}.
In particular, if $A$ is a bialgebra, the crossed product $A
\#_{f} \, H$ is a bialgebra with the coalgebra structure given by
the tensor product of coalgebras if and only if the compatibility
conditions (c) and (d) above hold.
\end{examples}

Let $A$ be a bialgebra and $\Omega(A) = \bigl(H, \triangleleft, \,
\triangleright, \, f \bigl)$ a bialgebra extending structure of
$A$. Then $i_A : A \to A  \ltimes H$,  $i_A (a) = a  \ltimes 1_H$,
for all $a\in A$ is an injective bialgebra map, $i_H : H \to
A\ltimes H$, $i_H (h) = 1_A \ot h$, for all $h\in H$ is an
injective coalgebra map and
$$
u: A \otimes H \rightarrow A \ltimes H, \qquad u(a \otimes h) =
i_A(a) \bullet i_H(h) = (a \ltimes 1_{H}) \bullet (1_{A} \ltimes
h) = a \ltimes h
$$
for all $a\in A$ and $h\in H$ is bijective, i.e. the unified
product $A \ltimes H$ factorizes through $A$ and $H$. The next
theorem shows the converse of this remark: any bialgebra $E$ that
factorizes through a subbialgebra of $A$ and a subcoalgebra $H$ is
isomorphic to a unified product. In order to avoid complicated
computations we use the following elementary remark:

\begin{lemma}\lelabel{lema}
Let $E$ be a bialgebra, $L$ a coalgebra and $u: L \rightarrow E$
an  isomorphism of coalgebras. Then there exists a unique algebra
structure on $L$ such that $u: L \rightarrow E$ is an isomorphism
of bialgebras given by:
$$
l \cdot l' := u^{-1}(u(l)u(l')) , \qquad 1_{L} := u^{-1}(1_{E})
$$
for all $l$, $l' \in L$. Furthermore, if $E$ has an antipode
$S_E$, then $L$ is a Hopf algebra with the antipode $S_L := u^{-1}
\circ S_E \circ u$.
\end{lemma}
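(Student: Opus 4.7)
The plan is to recognize this as a straightforward transport-of-structure argument: every piece of algebraic data on $L$ is forced by the requirement that $u$ be a bialgebra (resp.\ Hopf algebra) map, and conversely that forced data actually works because $u$ is bijective and $E$ is already a bialgebra.

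First I would establish uniqueness, since it also motivates the formulas. If $(L,\cdot,1_L)$ is any algebra structure making $u$ a bialgebra map, then $u(l\cdot l') = u(l)u(l')$ and $u(1_L)=1_E$; applying $u^{-1}$ gives exactly the formulas in the statement. So uniqueness is immediate and the formulas are the only candidates.

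Next I would check that the stated formulas do define a unital associative algebra on $L$ and that $u$ is an algebra isomorphism. Associativity, unitality and $u$-multiplicativity all follow by direct transport: for instance $(l\cdot l')\cdot l'' = u^{-1}\bigl(u(l)u(l')u(l'')\bigr) = l\cdot(l'\cdot l'')$ using associativity in $E$, and $u(l\cdot l') = u(l)u(l')$, $u(1_L)=1_E$ by the very definitions. So $u$ is both a coalgebra iso (by hypothesis) and an algebra iso, hence a bialgebra iso — provided the ambient coalgebra structure on $L$ is compatible with this new multiplication. The compatibility (i.e.\ that $\Delta_L$ and $\varepsilon_L$ are algebra maps) is inherited from $E$: since $\Delta_L = (u^{-1}\otimes u^{-1})\circ\Delta_E\circ u$ and $\varepsilon_L = \varepsilon_E\circ u$, and $\Delta_E$, $\varepsilon_E$ are algebra maps, one checks that $\Delta_L(l\cdot l') = \Delta_L(l)\Delta_L(l')$ by inserting the definition of $\cdot$ and using that $u$ commutes with comultiplication. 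This is the only place where one needs to write a short calculation, but it is mechanical.

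For the Hopf algebra part, I would set $S_L := u^{-1}\circ S_E\circ u$ and verify the antipode axiom $S_L\star \Id_L = \Id_L\star S_L = \eta_L\varepsilon_L$ directly. Using Sweedler notation for $L$ and the fact that $u$ is a coalgebra map, $(S_L\star\Id_L)(l) = S_L(l_{(1)})\cdot l_{(2)} = u^{-1}\bigl(S_E(u(l_{(1)}))\, u(l_{(2)})\bigr) = u^{-1}\bigl(S_E(u(l)_{(1)})\, u(l)_{(2)}\bigr) = u^{-1}(\varepsilon_E(u(l))1_E) = \varepsilon_L(l)\, 1_L$, and symmetrically on the other side.

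I do not expect any serious obstacle; the only mild subtlety is keeping track of the fact that $\Delta_L$ and $\varepsilon_L$ are not being changed — they are the ones $L$ already carries — and checking that these fit with the newly defined multiplication. Everything else is a transparent transport along the bijection $u$.
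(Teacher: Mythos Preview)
Your proposal is correct and follows essentially the same transport-of-structure argument as the paper, which dismisses the lemma as ``straightforward'' with only the remark that the multiplication on $L$ is a coalgebra map because it is a composition of coalgebra maps (namely $u^{-1}\circ m_E\circ(u\otimes u)$). That one-line observation is the paper's way of verifying the bialgebra compatibility; you check the equivalent condition that $\Delta_L$ and $\varepsilon_L$ are algebra maps, which amounts to the same thing.
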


\begin{proof} Straightforward: the algebra structure on $L$ is
obtained by transfering the algebra structure from $E$ via the
isomorphism of coalgebras $u$. The multiplication on $L$ is a
coalgebra map since it is a composition of coalgebra maps.
\end{proof}

\begin{theorem}\thlabel{2}
Let $E$ be a bialgebra, $A \subseteq E$ a subbialgebra, $H
\subseteq E$ a subcoalgebra such that $1_{E} \in H$ and the
multiplication map $u: A\otimes H \rightarrow E$, $u(a \otimes h)
= ah $, for all $a\in A$, $h\in H$ is bijective.

Then, there exists $\Omega(A)=(H, \triangleleft, \triangleright,
f)$ a bialgebra extending structure of $A$ such that $u: A \ltimes
H \rightarrow E$, $u(a \ltimes h) = ah$ is an isomorphism of
bialgebras. Furthermore, if $E$ is a Hopf algebra then $A \ltimes
H$ is a Hopf algebra.
\end{theorem}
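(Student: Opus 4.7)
My plan is to transport the bialgebra (resp. Hopf) structure of $E$ onto $A \otimes H$ via $u$, and then exhibit an extending datum for $A$ whose associated unified product coincides with this transported structure.

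First, $u: A \otimes H \to E$ is an isomorphism of coalgebras: $A \otimes H$ carries the tensor product coalgebra structure (both $A$ and $H$ being subcoalgebras of $E$), and $u$ is the restriction to $A \otimes H \subseteq E \otimes E$ of the multiplication $m_E$, which is a coalgebra map because $E$ is a bialgebra. Applying \leref{lema}, I transport the algebra structure of $E$ to a product $\bullet$ on $A \otimes H$ with unit $u^{-1}(1_E) = 1_A \otimes 1_H$, so that $u$ becomes a bialgebra isomorphism; if $E$ is a Hopf algebra, the lemma also endows $A \otimes H$ with the antipode $u^{-1} \circ S_E \circ u$.

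Next, I define the candidate extending datum $(H, \triangleleft, \triangleright, f)$ together with the multiplication $\cdot$ on $H$ by extracting the components of $u^{-1}$ applied to the mixed products in $E$:
\begin{eqnarray*}
h \triangleright c &:=& (Id_A \otimes \varepsilon_H)(u^{-1}(hc)), \quad h \triangleleft c := (\varepsilon_A \otimes Id_H)(u^{-1}(hc)), \\
f(g, h) &:=& (Id_A \otimes \varepsilon_H)(u^{-1}(gh)), \quad g \cdot h := (\varepsilon_A \otimes Id_H)(u^{-1}(gh)),
\end{eqnarray*}
for $g, h \in H$ and $c \in A$. Each of these is a composition of coalgebra maps ($m_E$, $u^{-1}$, identities, counits), hence itself a coalgebra map. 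The normalisations \equref{2}-\equref{3} and $\Delta_H(1_H) = 1_H \otimes 1_H$ are immediate from the identities $u^{-1}(a) = a \otimes 1_H$ and $u^{-1}(h) = 1_A \otimes h$ (consequences of $u(a \otimes 1_H) = a$ and $u(1_A \otimes h) = h$), together with the fact that $1_E$ is grouplike in the subcoalgebra $H$.

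The core step is to establish the Sweedler-form decompositions
$$
u^{-1}(hc) = (h_{(1)} \triangleright c_{(1)}) \otimes (h_{(2)} \triangleleft c_{(2)}), \quad u^{-1}(gh) = f(g_{(1)}, h_{(1)}) \otimes g_{(2)} \cdot h_{(2)},
$$
which I derive from the elementary identity $z = ((Id_A \otimes \varepsilon_H) \otimes (\varepsilon_A \otimes Id_H))\Delta_{A \otimes H}(z)$, valid for every $z \in A \otimes H$, applied to $z = u^{-1}(hc)$ and $z = u^{-1}(gh)$, together with the fact that $u^{-1}$ is a coalgebra map. Substituting these into $(a \otimes h) \bullet (c \otimes g) = u^{-1}(ahcg)$, pulling the $A$-factor $a(h_{(1)} \triangleright c_{(1)})$ to the left, and re-expanding $u^{-1}((h_{(2)} \triangleleft c_{(2)})g)$ by means of \equref{6}, I recover exactly the unified product formula \equref{10}. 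Thus $(A \otimes H, \bullet)$ is a bialgebra whose multiplication agrees with the unified product construction for the data above, and \thref{1} then forces the seven compatibilities (2a)-(2i) to hold automatically; hence $(H, \triangleleft, \triangleright, f)$ is a bialgebra extending structure of $A$ and $u: A \ltimes H \to E$ is an isomorphism of bialgebras (and of Hopf algebras in the Hopf case). The main obstacle is this decomposition step, since it is what allows one to recognise the abstractly transported algebra structure as the explicit unified product of \equref{10}; everything else reduces to bookkeeping and an appeal to \thref{1}.
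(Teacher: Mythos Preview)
Your proposal is correct and follows essentially the same approach as the paper: transport the bialgebra structure via \leref{lema}, define $\triangleright$, $\triangleleft$, $f$, $\cdot$ as the indicated components of $u^{-1}$, verify that the transported product coincides with the unified product formula \equref{10}, and then invoke \thref{1}. The only cosmetic difference is that the paper carries out the key decomposition step by writing $hc = \sum_j \alpha_j l_j$ and $l_j g = \sum_i A_{ji} Z_i$ explicitly and applying $\Delta_E$, whereas you phrase the same computation via the counit identity $z = \bigl((Id_A \otimes \varepsilon_H) \otimes (\varepsilon_A \otimes Id_H)\bigr)\Delta_{A\otimes H}(z)$; these are the same argument in different notation.
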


\begin{proof}
Since $E$ is a bialgebra, the multiplication $m_{E}: E \otimes E
\rightarrow E$ is a coalgebra map. Thus $u:A \otimes H \rightarrow
E$ is in fact an isomorphism of coalgebras, with its inverse
$u^{-1}: E \rightarrow A \otimes H$ which is also a coalgebra map.
The $k$-linear map
$$
\mu : H\otimes A \to A\ot H, \quad \mu (h\ot a) := u^{-1} (ha)
$$
for all $h\in H$ and $a\in A$ is a coalgebra map as a composition
of coalgebra maps. We define the actions $\triangleright$,
$\triangleleft$ by the formulas:
\begin{eqnarray}
\triangleright: H \otimes A \rightarrow A , \qquad \triangleright
&:=& (Id \otimes \varepsilon_{H}) \circ \mu \eqlabel{prima}\\
\triangleleft: H \otimes A \rightarrow H , \qquad \triangleleft
&:=& (\varepsilon_{A} \otimes Id) \circ \mu \eqlabel{adoua}
\end{eqnarray}
They are coalgebra maps as compositions of coalgebra maps.
Moreover, the normalization conditions \equref{2} and \equref{3}
are trivially fulfilled. More explicitly, $\triangleright$ and
$\triangleleft$ are given as follows: let $h \in H$ and $c \in A$.
Since $u$ is a bijective map, there exists an unique element
$\sum_{j} \alpha_{j} \otimes l_{j} \in A \otimes H$ such that $hc
= \sum_{j} \alpha_{j} l_{j}$. Then:
$$
h \triangleright c = \sum_{j} \alpha_{j} \varepsilon_{H}(l_{j})  ,
\qquad  h \triangleleft c = \sum_{j}
\varepsilon_{A}(\alpha_{j})l_{j}
$$
Next we construct the coalgebra maps $f: H \otimes H \rightarrow
A$ and $\cdot : H \otimes H \rightarrow H$. The $k$-linear map
$$
\nu : H\ot H \to A\ot H, \quad \nu (h\ot g) := u^{-1} (hg)
$$
for all $h$, $g \in H$ is a coalgebra map as a composition of
coalgebra maps. We define:
\begin{eqnarray}
f: H \otimes H \rightarrow A , \qquad f &:=& (Id \otimes
\varepsilon_{H}) \circ \nu \eqlabel{22} \\
\cdot : H \otimes H \rightarrow H , \qquad \cdot &:=&
(\varepsilon_{A} \otimes Id) \circ \nu \eqlabel{23}
\end{eqnarray}
They are coalgebra maps as compositions of coalgebra maps. The
normalization conditions $1_{E} \cdot h = h \cdot 1_{E} = h$ and
$f(h, 1_{E}) = f(1_{E}, h) = \varepsilon_{H}(h)1_{A}$, for all $h
\in H$ are trivially fulfilled.

In order to prove that $\Omega(A) = (H, \triangleleft, \,
\triangleright, \, f, \, \cdot)$ is a bialgebra extending
structure of $A$ we use \leref{lema} and then \thref{1}: the
unique algebra structure that can be defined on $A \otimes H$ such
that $u$ becomes an isomorphism of bialgebras is given by:
\begin{eqnarray*}
(a \otimes h) \bullet (c \otimes g) &{=}& u^{-1}\bigl(u(a \otimes
h) u(c \otimes g)\bigl)\\
&{=}& u^{-1} (ahcg)
\end{eqnarray*}
This algebra structure on $A \otimes H$ coincides with the one
given by \equref{10} on a unified product if and only if
$$
u^{-1}(ahcg) = a (h_{(1)}
\triangleright c_{(1)})f(h_{(2)} \triangleleft c_{(2)},g_{(1)})
\otimes (h_{(3)} \triangleleft c_{(3)}) \cdot g_{(2)}
$$
Since $u$ is a bijective map the above formula holds if and only
if:
\begin{equation}\eqlabel{24}
hcg = (h_{(1)} \triangleright c_{(1)})f(h_{(2)} \triangleleft
c_{(2)},g_{(1)})\bigl((h_{(3)} \triangleleft c_{(3)}) \cdot
g_{(2)}\bigl)
\end{equation}
holds for all $c \in A$ and $h, g \in H$. Therefore, the proof is
finished if we prove that the relation \equref{24} holds in the
bialgebra $E$. Let $c \in A$ and $h$, $g \in H$. Then there exists
an unique element $\sum_{j=1}^{n} \alpha_{j} \otimes l_{j} \in A
\otimes H$ such that:
\begin{equation}\eqlabel{25}
hc = \sum_{j=1}^{n} \alpha_{j} l_{j}
\end{equation}
Hence $h \triangleright c = \sum_{j=1}^{n} \varepsilon_{H}(l_{j})
\alpha_{j}$ and $h \triangleleft c = \sum_{j=1}^{n}
\varepsilon_{A}(\alpha_{j})l_{j}$. Moreover, for any $j = 1,
\cdots, n$ there exists an unique element $\sum_{i=1}^{m} A_{ji}
\otimes Z_{i} \in A \otimes H$ such that:
\begin{equation}\eqlabel{26}
l_{j}g = \sum_{i=1}^{m} A_{ji} Z_{i}
\end{equation}
Using relations \equref{22} and \equref{23} we obtain:
\begin{equation}\eqlabel{27}
f(l_{j},g) = \sum_{i=1}^{m} \varepsilon_{H}(Z_{i})A_{ji} , \qquad
l_{j} \cdot g = \sum_{i=1}^{m} \varepsilon_{A}(A_{ji})Z_{i}
\end{equation}
and
\begin{equation}\eqlabel{28}
hcg = \sum_{i,j=1}^{m,n} \alpha_{j} A_{ji}Z_{i}
\end{equation}
In what follows we use the fact that $m_{E}$, $\triangleright$ and
$\triangleleft$ are coalgebra maps. For example, by applying
$\Delta$ to the relation \equref{25} we obtain:
\begin{eqnarray*}
h_{(1)} \triangleright c_{(1)} \otimes h_{(2)} \triangleleft
c_{(2)} \otimes h_{(3)} \triangleleft c_{(3)} &{=}& \sum_{j=1}^{n}
\varepsilon_{H}(l_{j(1)}) \alpha_{j(1)} \otimes
\varepsilon_{A}(\alpha_{j(2)})l_{j(2)} \otimes
\varepsilon_{A}(\alpha_{j(3)})l_{j(3)}\\
&{=}& \sum_{j=1}^{n} \varepsilon_{H}(l_{j(1)}) \alpha_{j} \otimes
l_{j(2)} \otimes l_{j(3)}\\
&{=}& \sum_{j=1}^{n}  \alpha_{j} \otimes l_{j(1)} \otimes l_{j(2)}
\end{eqnarray*}
Thus, we have:
\begin{equation}\eqlabel{29}
h_{(1)} \triangleright c_{(1)} \otimes h_{(2)} \triangleleft
c_{(2)} \otimes h_{(3)} \triangleleft c_{(3)} = \sum_{j=1}^{n}
\alpha_{j} \otimes l_{j(1)} \otimes l_{j(2)}
\end{equation}
Moreover, by applying $\Delta$ to the relation \equref{26} and
using the relation \equref{27} we obtain:
\begin{equation}\eqlabel{334}
f\bigl(l_{j(1)}, \, g_{(1)}\bigl) \otimes l_{j(1)} \cdot g_{(2)} =
\sum_{i=1}^{m} \varepsilon_{H}(Z_{i_{(1)}}) A_{ji_{(1)}} \otimes
\varepsilon_{A}(A_{ji_{(2)}}) Z_{i_{(2)}} = \sum_{i=1}^m A_{ji}
\otimes Z_{i}
\end{equation}
We denote by ${\rm RHS}$ the right hand side of \equref{24}. Then:
\begin{eqnarray*}
{\rm RHS} &\stackrel{\equref{29}} {=}& \sum_{j=1}^{n} \alpha_{j}
f\bigl(l_{j(1)}, \, g_{(1)}\bigl) \, l_{j(2)} \cdot g_{(2)}\\
&\stackrel{\equref{334}} {=}&
\sum_{i,j=1}^{m,n} \alpha_{j} A_{ji}Z_{i} \\
&\stackrel{\equref{28}} {=}& hcg
\end{eqnarray*}
Thus the relation \equref{24} holds true and the proof is now
finished since $u^{-1}(1_{E}) = 1_{A} \otimes 1_{E}$. We use
\thref{1} in order to obtain that $\Omega(A) = (H, \triangleleft,
\, \triangleright, \, f, \, \cdot)$ is a bialgebra extending
structure of $A$. Moreover, if $E$ is a Hopf algebra then $A
\ltimes H$ is also a Hopf algebra with the antipode given by $S_{A
\ltimes H} = u^{-1} \circ S_{E} \circ u$ according to
\leref{lema}.
\end{proof}

Next we construct an antipode for the unified product $A \ltimes
H$.

\begin{proposition}\prlabel{prext}
Let $A$ be a Hopf algebra with an antipode $S_A$ and $\Omega(A) =
(H, \triangleleft, \triangleright, f)$ a bialgebra extending
structure of $A$ such that there exists an antimorphism of
coalgebras $S_{H}: H \rightarrow H$ such that
\begin{equation}\eqlabel{a-bial3}
h_{(1)} \cdot S_{H}(h_{(2)}) = S_{H}(h_{(1)}) \cdot h_{(2)} =
\varepsilon_{H}(h)1_{H}
\end{equation}
for all $h \in H$. Then the unified product $A \ltimes H$ is a
Hopf algebra with the antipode $S : A \ltimes H \rightarrow A
\ltimes H$ given by:
\begin{equation}\eqlabel{antipod}
S(a \ltimes g) := \Bigl(S_{A}[f\bigl(S_{H}(g_{(2)}), \,
g_{(3)}\bigl)] \ltimes S_{H}(g_{(1)})\Bigl) \bullet \bigl(S_{A}(a)
\ltimes 1_{H}\bigl)
\end{equation}
for all $a\in A$ and $g\in H$.
\end{proposition}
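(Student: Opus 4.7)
The plan is to verify directly that $S$, as defined by \equref{antipod}, is a two-sided convolution inverse of $\mathrm{Id}_{A\ltimes H}$ in $\End(A \ltimes H)$.

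A substantial first reduction is the observation that \equref{antipod} can be read as $S(a \ltimes g) = S(1_A \ltimes g) \bullet S(a \ltimes 1_H)$, together with the easy identity $S(a \ltimes 1_H) = S_A(a) \ltimes 1_H$ --- the latter follows from $f(1_H, 1_H) = 1_A$ and $S_H(1_H) = 1_H$, where this second identity is obtained by applying $\varepsilon_H \otimes \mathrm{id}$ to \equref{a-bial3} at $h = 1_H$. Combined with the factorization $a \ltimes g = (a \ltimes 1_H) \bullet (1_A \ltimes g)$ from \equref{16} and \equref{13}, a short computation yields
\begin{equation*}
(S * \mathrm{Id})(a \ltimes g) = \varepsilon_A(a)\, (S * \mathrm{Id})(1_A \ltimes g)
\end{equation*}
and analogously for $\mathrm{Id} * S$. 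This reduces the proof to verifying $(S * \mathrm{Id})(1_A \ltimes g) = (\mathrm{Id} * S)(1_A \ltimes g) = \varepsilon_H(g)(1_A \ltimes 1_H)$ for all $g \in H$.

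For the $S * \mathrm{Id}$ side, unfolding via \equref{14} and using the antimorphism property $\Delta_H \circ S_H = \tau \circ (S_H \otimes S_H) \circ \Delta_H$ yields, in six-fold Sweedler notation,
\begin{equation*}
(S * \mathrm{Id})(1_A \ltimes g) = \sum S_A\bigl[f(S_H(g_{(3)}), g_{(4)})\bigr] \cdot f(S_H(g_{(2)}), g_{(5)}) \,\ltimes\, S_H(g_{(1)}) \cdot g_{(6)}.
\end{equation*}
The key observation is that the factor $f(S_H(g_{(3)}), g_{(4)}) \otimes f(S_H(g_{(2)}), g_{(5)})$ is precisely $\Delta_A\bigl(f(S_H(\bar g_{(2)}), \bar g_{(3)})\bigr)$ in four-fold Sweedler (denoted $\bar g$), since $f$ is a coalgebra map and $S_H$ is a coalgebra antimorphism. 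Applying the antipode axiom $S_A(x_{(1)}) x_{(2)} = \varepsilon_A(x) 1_A$ in $A$ collapses the first factor to $\varepsilon_H(\bar g_{(2)})\varepsilon_H(\bar g_{(3)}) 1_A$ (using $\varepsilon_H \circ S_H = \varepsilon_H$ and $\varepsilon_A \circ f = \varepsilon_H \otimes \varepsilon_H$); summing the middle counits and invoking \equref{a-bial3} on $S_H(g_{(1)}) \cdot g_{(2)}$ then yields $\varepsilon_H(g)(1_A \ltimes 1_H)$. Notably, this direction uses none of the compatibility conditions from \thref{1}.

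The $\mathrm{Id} * S$ side is genuinely more delicate: the bullet expansion via \equref{10} now brings in both actions $\triangleright, \triangleleft$ acting on the element $c := S_A\bigl[f(S_H(g_{(3)}), g_{(4)})\bigr] \in A$, and the comultiplication of $c$ itself involves $S_A$ as a coalgebra antimorphism together with the coalgebra map $f$. The analogous computation now makes essential use of the symmetry compatibilities $(2h)$ and $(2i)$ of \thref{1} to permute Sweedler indices so that the antipode relations in $A$ and for $S_H$ can be applied, together with the twisted module condition $(2f)$ and the cocycle condition $(2g)$ to telescope the resulting $f$- and action-terms down to $\varepsilon_H(g)(1_A \ltimes 1_H)$. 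The main obstacle is combinatorial: juggling Sweedler indices up to six-fold and applying the correct compatibility at each step; no conceptually new ingredient beyond those already appearing in \thref{1} is needed.
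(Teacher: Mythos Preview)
Your proof is correct and follows essentially the same approach as the paper: the paper also verifies $S * \mathrm{Id} = \eta\varepsilon$ by the direct computation you describe (with the reduction to $a=1_A$ done inline rather than upfront), and then defers the $\mathrm{Id} * S$ side to ``similar computations''. Your assessment that the right-inverse side is more delicate and requires the compatibilities from \thref{1} is in fact more candid than the paper's treatment, though neither you nor the paper carry out those details in full.
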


\begin{proof}
Let $a \ltimes g \in A \ltimes H$. Since the multiplication
$\bullet$ on $A \ltimes H$ is associative we have:
\begin{eqnarray*}
&&S(a_{(1)}\ltimes g_{(1)}) \bullet (a_{(2)} \ltimes g_{(2)}) = \\
&=& \Bigl(S_{A}[f\bigl(S_{H}(g_{(2)}),g_{(3)}\bigl)] \ltimes
S_{H}(g_{(1)})\Bigl) \bullet \bigl(S_{A}(a_{(1)}) \ltimes
1_{H}\bigl) \bullet (a_{(2)} \ltimes g_{(4)})\\
&\stackrel{\equref{13}} {=}&
\Bigl(S_{A}[f\bigl(S_{H}(g_{(2)}),g_{(3)}\bigl)] \ltimes
S_{H}(g_{(1)})\Bigl) \bullet \bigl(S_{A}(a_{(1)})a_{(2)} \ltimes
g_{(4)}\bigl)\\
&{=}& \varepsilon_{A}(a)\Bigl(S_{A}[f\bigl(S_{H}(g_{(2)}),
g_{(3)}\bigl)]
\ltimes S_{H}(g_{(1)})\Bigl) \bullet (1_{A} \ltimes g_{(4)})\\
&\stackrel{\equref{14}} {=}&
\varepsilon_{A}(a)S_{A}\Bigl(f\bigl(S_{H}(g_{(2)}),
g_{(3)}\bigl)\Bigl) f\bigl(S_{H}(g_{(1)})_{(1)},g_{(4)(1)}\bigl)
\ltimes S_{H}(g_{(1)})_{(2)} \cdot g_{(4)(2)}\\
&{=}& \varepsilon_{A}(a)S_{A}\Bigl(f\bigl(S_{H}(g_{(1)(2)}),
g_{(2)}\bigl)\Bigl)
f\bigl(S_{H}(g_{(1)(1)})_{(1)},g_{(3)(1)}\bigl)
\ltimes S_{H}(g_{(1)(1)})_{(2)} \cdot g_{(3)(2)}\\
&{=}& \varepsilon_{A}(a)S_{A}\Bigl(f\bigl(S_{H}(g_{(1)})_{(1)},
g_{(2)}\bigl)\Bigl)
f\bigl(S_{H}(g_{(1)})_{(2)(1)},g_{(3)(1)}\bigl)
\ltimes S_{H}(g_{(1)})_{(2)(2)} \cdot g_{(3)(2)}\\
&{=}& \varepsilon_{A}(a)S_{A}\Bigl(f\bigl(S_{H}(g_{(1)})_{(1)},
g_{(2)}\bigl)\Bigl) f\bigl(S_{H}(g_{(1)})_{(2)},g_{(3)(1)}\bigl)
\ltimes S_{H}(g_{(1)})_{(3)} \cdot g_{(3)(2)}\\
&{=}& \varepsilon_{A}(a)S_{A}\Bigl(f\bigl(S_{H}(g_{(1)})_{(1)},
g_{(2)(1)}\bigl)\Bigl)
f\bigl(S_{H}(g_{(1)})_{(2)},g_{(2)(2)(1)}\bigl)
\ltimes S_{H}(g_{(1)})_{(3)} \cdot g_{(2)(2)(2)}\\
&{=}& \varepsilon_{A}(a)S_{A}\Bigl(f\bigl(S_{H}(g_{(1)})_{(1)},
g_{(2)(1)}\bigl)\Bigl)
f\bigl(S_{H}(g_{(1)})_{(2)},g_{(2)(2)}\bigl)
\ltimes S_{H}(g_{(1)})_{(3)} \cdot g_{(2)(3)}\\
&{=}& \varepsilon_{A}(a)S_{A}\Bigl(f\bigl(S_{H}(g_{(1)})_{(1)(1)},
g_{(2)(1)(1)}\bigl)\Bigl)
f\bigl(S_{H}(g_{(1)})_{(1)(2)},g_{(2)(1)(2)}\bigl)
\ltimes\\
&&S_{H}(g_{(1)})_{(2)} \cdot g_{(2)(2)}\\
&\stackrel{\equref{4}} {=}&
\varepsilon_{A}(a)S_{A}\Bigl(f\bigl(S_{H}(g_{(1)})_{(1)},g_{(2)(1)}\bigl)_{(1)}\Bigl)
f\bigl(S_{H}(g_{(1)})_{(1)},g_{(2)(1)}\bigl)_{(2)}
\ltimes S_{H}(g_{(1)})_{(2)} \cdot g_{(2)(2)}\\
&{=}& \varepsilon_{A}(a)\varepsilon_{A}
\Bigl(f\bigl(S_{H}(g_{(1)})_{(1)},g_{(2)(1)}\bigl)\Bigl)1_{A}
\ltimes S_{H}(g_{(1)})_{(2)} \cdot g_{(2)(2)}\\
&\stackrel{\equref{4}} {=}&
\varepsilon_{A}(a)\varepsilon_{H}\Bigl(S_{H}(g_{(1)})_{(1)}\Bigl)
\varepsilon_{H}\bigl(g_{(2)(1)}\bigl)\ltimes S_{H}(g_{(1)})_{(2)}
\cdot g_{(2)(2)}\\
&{=}&\varepsilon_{A}(a)1_{A}\ltimes S_{H}(g_{(1)}) \cdot g_{(2)}\\
&{=}&\varepsilon_{A}(a)\varepsilon_{H}(g)1_{A} \ltimes 1_{H}
\end{eqnarray*}
Thus $S$ is a left inverse of the identity in the convolution
algebra ${\rm Hom} ( A \ltimes H, A \ltimes H)$. By similar
computations one can show that $S$ is also a right inverse of the
identity, thus is an antipode of $A\ltimes H$.
\end{proof}

In \prref{prext} we imposed the condition for $S_{H}$ to be a
coalgebra antimorphism because the algebra structure on $H$ is not
an associative one and for this reason a $k$-linear map $S_{H}$
which satisfies the antipode condition \equref{a-bial3} is not
necessarily a coalgebra antimorphism as in the classical case of
Hopf algebras.

\section{The classification of unified products}\selabel{sec3}

In this section we prove the classification theorem for unified
products: as a special case, a classification theorem for
bicrossed products of Hopf algebras is obtained. Our view point is
inspired from Schreier's classification theorem for extensions of
an abelian group $K$ by a group $Q$ \cite[Theorem 7.34]{R}: they
are classified by the second cohomology group $H^2 (Q, K)$. Let
$\varphi : G \to G'$ be a morphism between two extensions of a
group $K$ by a group $Q$, i.e. $\varphi$ is a morphism of groups
such that the diagram
\begin{eqnarray*}
\xymatrix {& k[K] \ar[r]^{i} \ar[d]_{Id} & {k[G]}
\ar[r]^{\pi}\ar[d]^{\varphi} & k[Q] \ar[d]^{Id}\\
& k[K]\ar[r]^{i'} & {k[G']}\ar[r]^{\pi '} & k[Q]}
\end{eqnarray*}
is commutative (we wrote the diagram for the induced morphism for
group algebras). Then $\varphi$ is an isomorphism \cite[Theorem
7.32]{R}. Now, the left hand square of the diagram is commutative
if and only if $\varphi$ is a left $k[K]$-module map while the
right hand square of the diagram is commutative if and only if
$\varphi$ is a morphism of right $k[Q]$-comodules. This motivates
the way of considering the classification of unified products up
to an isomorphism of Hopf algebras that is also a left $A$-module
map and a right $H$-comodule map.

Let $\Omega(A) = \bigl(H, \triangleleft, \, \triangleright, \, f
\bigl)$ be a bialgebra extending structure of $A$. The unified
product $A \ltimes H$ is a right $H$-comodule via the coaction $a
\ltimes h \mapsto a \ltimes h_{(1)} \ot h_{(2)}$, for all $a \in
A$ and $h\in H$ and a left $A$-module via the restriction of
scalars map $i_A : A \to A\ltimes H$.

From now on the Hopf algebra structure on $A$ and the coalgebra
structure on $H$ will be set. First, we need the following.

\begin{lemma}\lelabel{3.1}
Let $A$ be a Hopf algebra, $\Omega(A) = \bigl(H, \triangleleft, \,
\triangleright, \, f \bigl)$ and $\Omega'(A) = \bigl(H,
\triangleleft', \, \triangleright', \, f' \bigl)$ two Hopf algebra
extending structures of $A$. Then a $k$-linear map $\varphi: A
\ltimes H \rightarrow A \ltimes' H $ is a left $A$-module, a right
$H$-comodule and a coalgebra morphism if and only if there exists
a unique morphism of coalgebras $u: H\to A$ such that
\begin{equation}\eqlabel{3.0}
h_{(1)} \otimes u(h_{(2)}) = h_{(2)} \otimes u(h_{(1)})
\end{equation}
for all $h\in H$ and $\varphi$ is given by
\begin{equation}\eqlabel{3.0.01}
\varphi(a \ltimes h) = au(h_{(1)}) \ltimes' h_{(2)}
\end{equation}
for all $a\in A$ and $h\in H$. Furthermore, any such a morphism
$\varphi: A \ltimes H \rightarrow A \ltimes' H $ is an isomorphism
with the inverse given by
$$
\psi: A \ltimes' H \rightarrow A \ltimes H, \quad \psi(a \ltimes'
h) = aS_{A}\bigl(u(h_{(1)})\bigl) \ltimes h_{(2)}
$$
for all $a\in A$ and $h\in H$.
\end{lemma}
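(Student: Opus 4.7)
The plan is to establish the characterization by reconstructing $u$ explicitly from $\varphi$, then handle the bijectivity claim via the proposed inverse.

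For the ``if'' direction, assuming a coalgebra map $u : H \to A$ satisfying \equref{3.0} is given, I would verify that $\varphi(a \ltimes h) := au(h_{(1)}) \ltimes' h_{(2)}$ has the three stated properties. Left $A$-linearity is immediate since $a' \cdot (a \ltimes h) = a'a \ltimes h$ by \equref{13}, and right $H$-colinearity is immediate from coassociativity. The substantive verification is that $\varphi$ is a coalgebra map: expanding both $\Delta_{A \ltimes' H}\bigl(\varphi(a \ltimes h)\bigr)$ and $(\varphi \ot \varphi)\bigl(\Delta_{A \ltimes H}(a \ltimes h)\bigr)$ through the tensor product coalgebra structure and using $\Delta_A \circ u = (u \ot u)\Delta_H$ yields, respectively, $a_{(1)}u(h_{(1)}) \ltimes' h_{(3)} \ot a_{(2)}u(h_{(2)}) \ltimes' h_{(4)}$ and $a_{(1)}u(h_{(1)}) \ltimes' h_{(2)} \ot a_{(2)}u(h_{(3)}) \ltimes' h_{(4)}$. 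Their equality reduces to the identity $u(h_{(1)}) \ot h_{(2)} \ot u(h_{(3)}) = u(h_{(1)}) \ot h_{(3)} \ot u(h_{(2)})$ in $A \ot H \ot A$, which follows from applying \equref{3.0} to the element $h_{(2)}$ and then tensoring on the left by $u(h_{(1)})$.

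For the ``only if'' direction, given such a $\varphi$, I would define $u : H \to A$ by $u(h) := (Id_A \ot \varepsilon_H)\bigl(\varphi(1_A \ltimes h)\bigr)$. The right $H$-comodule axiom applied to $1_A \ltimes h$, followed by the projection $Id \ot \varepsilon_H \ot Id$ on $A \ot H \ot H$, forces $\varphi(1_A \ltimes h) = u(h_{(1)}) \ltimes' h_{(2)}$; left $A$-linearity then yields the full formula \equref{3.0.01}, which also forces uniqueness of $u$. Counit preservation $\varepsilon_A \circ u = \varepsilon_H$ follows from $\varepsilon \circ \varphi = \varepsilon$. To extract both $\Delta_A \circ u = (u \ot u)\Delta_H$ and the lazy condition \equref{3.0}, I would use the coalgebra identity $\Delta_{A \ltimes' H}\varphi(1_A \ltimes h) = (\varphi \ot \varphi)\Delta_{A \ltimes H}(1_A \ltimes h)$ and project the resulting equation in $(A \ot H)^{\ot 2}$ onto suitable slots: the projection $Id \ot \varepsilon \ot Id \ot \varepsilon$ delivers comultiplicativity of $u$, while $\varepsilon \ot Id \ot Id \ot \varepsilon$ delivers \equref{3.0}, in both cases after collapsing the resulting Sweedler sums via the counit axioms.

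For the bijectivity claim, I would verify $\psi \circ \varphi = Id$ and $\varphi \circ \psi = Id$ directly on generic elements $a \ltimes h$ and $a \ltimes' h$. Both computations hinge on the coalgebra property of $u$ together with the antipode identities $u(h_{(1)}) S_A(u(h_{(2)})) = \varepsilon_H(h) 1_A = S_A(u(h_{(1)})) u(h_{(2)})$ (which hold because $\Delta_A u = (u \ot u)\Delta_H$ and $\varepsilon_A u = \varepsilon_H$ reduce them to the antipode axioms of $A$), followed by the counit axiom in $H$.

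The main subtle point throughout is the four-fold Sweedler bookkeeping in the coalgebra map verification: the lazy condition must be invoked at the \emph{interior} slots of expressions such as $u(h_{(1)}) \ot h_{(2)} \ot u(h_{(3)})$, which requires isolating the middle factor $h_{(2)}$ via coassociativity before \equref{3.0} can be applied.
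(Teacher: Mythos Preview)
Your proposal is correct and follows essentially the same approach as the paper: define $u$ via $(Id_A \ot \varepsilon_H)\circ\varphi\circ i_H$, use the right $H$-comodule condition together with left $A$-linearity to recover \equref{3.0.01}, and then extract both the comultiplicativity of $u$ and the lazy condition \equref{3.0} by applying the projections $Id \ot \varepsilon \ot Id \ot \varepsilon$ and $\varepsilon \ot Id \ot Id \ot \varepsilon$ to the coalgebra identity $\Delta\varphi = (\varphi\ot\varphi)\Delta$. The only cosmetic difference is that you treat the ``if'' direction first and reduce the coalgebra-map check to a three-tensor identity in $A\ot H\ot A$, whereas the paper works with the full four-tensor equation \equref{3.3}; your reduction is valid since the fourth slot $h_{(4)}$ can be recovered by coassociativity, and the key step---applying \equref{3.0} to the interior factor $h_{(2)}$---is exactly what the paper does.
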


\begin{proof}
Let $\varphi: A \ltimes H \rightarrow A \ltimes' H $ be a left
$A$-module, a right $H$-comodule and a coalgebra morphism. We
shall adopt the notation $\varphi (1_A \ltimes h) = \sum h^A \ot
h^H \in A\ot H$, for all $h\in H$. Since $\varphi$ is a left
$A$-module map we have
$$
\varphi(a \ltimes h) = a \varphi(1_A \ltimes h) = a \sum h^A \ot
h^H
$$
for all $a\in A$ and $h\in H$. As $\varphi$ is also a right
$H$-comodule map we have:
$$
\sum a h^A \ot (h^H)_{(1)} \ot (h^H)_{(2)} = \varphi(a \ltimes
h_{(1)} ) \ot h_{(2)}
$$
By applying $\varepsilon_H$ on the second position of the above
identity we obtain:
$$
\varphi(a \ltimes h) = \sum a (h_{(1)})^A \varepsilon_H (
(h_{(1)})^H ) \ot h_{(2)}
$$
for all $a\in A$ and $h\in H$. Now, if we define $u: H \rightarrow
A$ by:
$$
u(h) = (Id \otimes \varepsilon_H) \circ \varphi (1_A \ltimes h) =
\sum h^A \varepsilon_H (h^H)
$$
for all $h\in H$, it follows that \equref{3.0.01} holds. We shall
prove now that $\varphi$ given by \equref{3.0.01} is a coalgebra
map if and only if $u$ is a coalgebra map and \equref{3.0} holds.
First we observe that $\varepsilon_{A \ltimes' H } \circ \varphi =
\varepsilon_{A \ltimes H}$ if and only if $\varepsilon_A \circ u =
\varepsilon_H$. Now, the fact that $\varphi$ is comultiplicative
is equivalent to:
\begin{equation}\eqlabel{3.3}
u(h_{(1)})_{(1)} \otimes h_{(2)} \otimes u(h_{(1)})_{(2)} \otimes
h_{(3)} = u(h_{(1)}) \otimes h_{(2)} \otimes u(h_{(3)}) \otimes
h_{(4)}
\end{equation}
for all $h\in H$. By applying $Id \otimes \varepsilon_H \otimes Id
\otimes \varepsilon_H $ to this relation we obtain that $u$ is a
coalgebra map; using this fact and then applying $\varepsilon_A
\otimes Id \otimes Id \otimes \varepsilon_H $ in relation
\equref{3.3} we obtain relation \equref{3.0}. Conversely, if $u$
is a coalgebra map such that relation \equref{3.0} holds, then
\equref{3.3} follows straightforward, i.e. $\varphi$ is a
coalgebra map. The fact that $\psi$ is an inverse for $\phi$ is
also straightforward.
\end{proof}

\begin{remark}
At this point we should remark the perfect similarity with the
theory of extensions from the groups case. If $\varphi: A \ltimes
H \rightarrow A \ltimes' H $ is a left $A$-module, a right
$H$-comodule and a coalgebra morphism between two unified products
then the following diagram
\begin{eqnarray*}
\xymatrix {& A \ar[r]^{i_{A}} \ar[d]_{Id_{A}} & {A\bowtie H}
\ar[r]^{\pi_{H}}\ar[d]^{\varphi} & H\ar[d]^{Id_{H}}\\
& A\ar[r]^{i_{A}} & {A\bowtie' H}\ar[r]^{\pi_{H}} & H}
\end{eqnarray*} is commutative and $\varphi$ is an isomorphism.
\end{remark}

For $A = k$ and a Hopf algebra $H$ the group $H^1_l(H, k)$ of all
unitary algebra maps $u: H\rightarrow k$ satisfying the
compatibility condition \equref{laz} below was called in
\cite[Definition 1.1]{BK} the \emph{first lazy cohomology group}
of $H$ with coefficients in $k$. We shall now define the coalgebra
version of lazy $1$-cocyles.

\begin{definition}\delabel{lazydef}
Let $A$ be a Hopf algebra and $H$ a coalgebra, unitary not
necessarily associative algebra. A morphism of coalgebras $u: H
\rightarrow A$ is called a \emph{coalgebra lazy $1$-cocyle} if
$u(1_H) = 1_A$ and the following compatibility holds:
\begin{equation}\eqlabel{laz}
h_{(1)} \otimes u(h_{(2)})  = h_{(2)} \otimes u(h_{(1)})
\end{equation}
for all $h\in H$. We denote by $H^{1}_{l, c} (H, A)$ the group of
all coalgebra lazy $1$-cocyles of $H$ with coefficients in $A$.
\end{definition}

$H^{1}_{l, c} (H, A)$ is a group with respect to the convolution
product. We have to prove that if $u$ and $v\in H^{1}_{l, c} (H,
A)$, then $u
* v \in H^{1}_{l, c} (H, A)$. Indeed, is straightforward to prove
that $u
* v$ satisfy \equref{laz}. Let us show that $u * v$ is a morphism of
coalgebras. First, if we apply $v$ on the first position in
\equref{laz} we obtain $ v (h_{(1)}) \otimes u(h_{(2)})  = v
(h_{(2)}) \otimes u(h_{(1)})$, for all $h\in H$. Using this
relation we obtain:
\begin{eqnarray*}
\Delta_A \bigl ( u(h_{(1)}) v(h_{(2)}) \bigl) &{=}& u(h_{(1)})
v(h_{(3)}) \ot u(h_{(2)}) v(h_{(4)}) \\
&{=}& u(h_{(1)}) v(h_{(2)}) \ot u(h_{(3)}) v(h_{(4)})\\
&{=}& u * v (h_{(1)}) \ot u* v (h_{(2)})
\end{eqnarray*}
for all $h\in H$, hence $u * v$ is also a coalgebra map.

The main theorem of this section now follows:

\begin{theorem}\thlabel{3.4}
Let $A$ be a Hopf algebra, $\Omega(A) = \bigl(H, \triangleleft, \,
\triangleright, \, f \bigl)$ and $\Omega'(A) = \bigl(H,
\triangleleft', \, \triangleright', \, f' \bigl)$ two Hopf algebra
extending structures of $A$. Then there exists $\varphi: A
\ltimes' H \rightarrow A \ltimes H $ a left $A$-module, a right
$H$-comodule and a Hopf algebra map if and only if $\triangleleft'
= \triangleleft $ and there exists a coalgebra lazy $1$-cocyle $u
\in H^{1}_{l, c} (H, A)$ such that:
\begin{eqnarray}
h\triangleright' c &{=}& u(h_{(1)}) (h_{(2)} \triangleright
c_{(1)}) S_{A}\Bigl(u \bigl(h_{(3)} \triangleleft
c_{(2)}\bigl)\Bigl)  \eqlabel{C2} \\
f'(h , \, g) &{=}&
u(h_{(1)})(h_{(2)} \triangleright u(g_{(1)}))f(h_{(3)}
\triangleleft u(g_{(2)}) , \, g_{(3)})
S_{A}\Bigl(u\bigl(h_{(4)} \cdot' g_{(4)}\bigl)\Bigl) \\
\eqlabel{C3}
 h \cdot' g &{=}& \bigl(h \triangleleft u(g_{(1)})\bigl) \cdot
\, g_{(2)} \eqlabel{C4}
\end{eqnarray}
for all $h$, $g \in H$ and $c \in A$. In this case $\varphi$ is
given by \equref{3.0.01} and it is an isomorphism.
\end{theorem}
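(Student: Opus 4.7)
My plan is to leverage \leref{3.1}: any $k$-linear map $\varphi : A \ltimes' H \to A \ltimes H$ that is simultaneously a left $A$-module map, a right $H$-comodule map, and a coalgebra map necessarily has the form $\varphi(a \ltimes' h) = a u(h_{(1)}) \ltimes h_{(2)}$ for a unique coalgebra lazy $1$-cocycle $u \in H^{1}_{l,c}(H,A)$, and moreover every such $\varphi$ is automatically an isomorphism. Since $A$ is a Hopf algebra and any bialgebra map between two Hopf algebras is automatically compatible with antipodes, the theorem reduces to characterizing when such a $\varphi$ is also multiplicative. Because $A \ltimes' H$ is generated as an algebra by $\{a \ltimes' 1_H \mid a\in A\} \cup \{1_A \ltimes' h \mid h\in H\}$ and $\varphi$ is already left $A$-linear with $\varphi(a \ltimes' 1_H) = a \ltimes 1_H$, multiplicativity collapses to the two non-trivial identities
\[
\varphi\!\bigl((1_A \ltimes' h) \bullet' (c \ltimes' 1_H)\bigr) = \varphi(1_A \ltimes' h) \bullet \varphi(c \ltimes' 1_H),
\]
\[
\varphi\!\bigl((1_A \ltimes' h) \bullet' (1_A \ltimes' g)\bigr) = \varphi(1_A \ltimes' h) \bullet \varphi(1_A \ltimes' g)
\]
for all $h, g \in H$ and $c \in A$; the remaining generator products are preserved by construction.

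For the first identity I would expand both sides using \equref{15} and \equref{3.0.01} to obtain an equation in $A \otimes H$ whose left side is $(h_{(1)} \triangleright' c_{(1)}) u(h_{(2)} \triangleleft' c_{(2)}) \ltimes h_{(3)} \triangleleft' c_{(3)}$ and whose right side is $u(h_{(1)})(h_{(2)} \triangleright c_{(1)}) \ltimes h_{(3)} \triangleleft c_{(2)}$. Applying $\varepsilon_A \otimes \mathrm{id}$ and collapsing all counit factors (using that $\triangleright$, $\triangleright'$, $\triangleleft$, $\triangleleft'$ and $u$ are coalgebra maps) forces $\triangleleft' = \triangleleft$. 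After substituting this, applying $\mathrm{id} \otimes \varepsilon_H$ yields the $A$-valued identity $(h_{(1)} \triangleright' c_{(1)}) u(h_{(2)} \triangleleft c_{(2)}) = u(h_{(1)})(h_{(2)} \triangleright c)$. The key tool now is that since $u$ is a coalgebra map and $A$ is a Hopf algebra, $S_A \circ u$ is the two-sided convolution inverse of $u$ in $\mathrm{Hom}(H,A)$, that is, $u(x_{(1)}) S_A(u(x_{(2)})) = \varepsilon_H(x) 1_A = S_A(u(x_{(1)})) u(x_{(2)})$. Replacing $h, c$ by $h_{(1)}, c_{(1)}$ in the identity and multiplying both sides on the right by $S_A(u(h_{(2)} \triangleleft c_{(2)}))$ collapses the $u$-factor on the left via this inverse relation and delivers exactly \equref{C2}. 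The converse direction (substituting \equref{C2} back and using the inverse relation on the other side) is a direct calculation.

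The second identity is handled in the same spirit, using \equref{14} on the primed side and \equref{10} on the unprimed side. The resulting equation in $A\otimes H$ has an $H$-component (extracted by $\varepsilon_A \otimes \mathrm{id}$ and counit reductions) which yields exactly \equref{C4}, and an $A$-component (extracted by $\mathrm{id} \otimes \varepsilon_H$ after substituting \equref{C4}) which reduces to $f'(h_{(1)}, g_{(1)}) u(h_{(2)} \cdot' g_{(2)}) = u(h_{(1)})(h_{(2)} \triangleright u(g_{(1)})) f(h_{(3)} \triangleleft u(g_{(2)}), g_{(3)})$. Substituting $h \to h_{(1)}$, $g \to g_{(1)}$ and multiplying on the right by $S_A(u(h_{(2)} \cdot' g_{(2)}))$, then once more invoking the convolution-inverse relation to annihilate the $u$-factor, isolates $f'(h,g)$ and produces \equref{C3}. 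The converse is a direct substitution.

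The principal obstacle is not conceptual but organizational: keeping the iterated-coproduct Sweedler indices on $h, g, c$ properly aligned across the two sides, inserting counits in the right slots using the coalgebra-map axioms \equref{6}, \equref{8}, \equref{4}, and invoking the convolution-inverse identity $u \ast (S_A \circ u) = \varepsilon_H 1_A$ at precisely the right stage of each computation. Once the four compatibilities $\triangleleft' = \triangleleft$, \equref{C2}, \equref{C3} and \equref{C4} are established, the final assertion of the theorem---that $\varphi$ is given by \equref{3.0.01} and is an isomorphism---is already contained in \leref{3.1}, and the bialgebra map $\varphi$ is automatically a Hopf algebra map between the Hopf algebras $A \ltimes' H$ and $A \ltimes H$.
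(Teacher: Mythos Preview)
Your proposal is correct and follows essentially the same approach as the paper's proof. Both arguments invoke \leref{3.1} to reduce to the form \equref{3.0.01}, use that $S_A \circ u$ is the convolution inverse of $u$, and then extract the four compatibilities $\triangleleft' = \triangleleft$, \equref{C2}, \equref{C3}, \equref{C4} by applying $\varepsilon_A \otimes \mathrm{Id}$ and $\mathrm{Id} \otimes \varepsilon_H$ to the multiplicativity identity; the only organizational difference is that the paper writes the multiplicativity condition as a single identity $(C)$ in general $h,c,g$ and then specializes $g = 1_H$ respectively $c = 1_A$, whereas you reduce first via the generating set $T$, which amounts to exactly the same two specializations.
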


\begin{proof}
We already proved in \leref{3.1} that $\varphi: A \ltimes' H
\rightarrow A \ltimes H $ is a left $A$-module, a right
$H$-comodule and a coalgebra map if and only if $\varphi (a
\ltimes' h) = a u(h_{(1)}) \ltimes h_{(2)}$, for all $a\in A$,
$h\in H$ and for a unique coalgebra map $u: H \to A $ such that
the \equref{laz} holds. Of course, $\varphi (1_A \ltimes' 1_H) =
1_A \ltimes 1_H$ if and only if $u$ is unitary. Moreover, as $u$
is a morphism of coalgebras it is invertible in convolution with
the inverse $u^{-1} = S_A \circ u$.

In what follows we shall prove, in the hypothesis that $\varphi$
is a coalgebra map and $u$ is unitary, that $\varphi$ is an
algebra map (thus a map of bialgebras) if and only if $
\triangleleft' =  \triangleleft $ and the compatibility conditions
\equref{C2} - \equref{C4} hold. By a straightforward computation
we can show that $\varphi$ is an algebra map if and only if
\begin{eqnarray*}
(C) \,\, \, (h_{(1)} \triangleright' c_{(1)}) f'\bigl(h_{(2)}
\triangleleft' c_{(2)} , \, g_{(1)}\bigl) u\bigl( (h_{(3)}
\triangleleft' c_{(3)}) \cdot' g_{(2)}\bigl) \ltimes (h_{(4)}
\triangleleft'
c_{(4)}) \cdot' g_{(3)} = \\
= u(h_{(1)})\bigl(h_{(2)} \triangleright c_{(1)}u(g_{(1)})\bigl)
f\bigl(h_{(3)} \triangleleft c_{(2)}u(g_{(2)}) , \, g_{(4)}\bigl)
\ltimes \bigl(h_{(4)} \triangleleft c_{(3)}u(g_{(3)})\bigl) \cdot
g_{(5)}\eqlabel{3.5}
\end{eqnarray*}
for all $h$, $g \in H$ and $c \in A$. We shall prove that the
compatibility $(C)$ is equivalent to \equref{C2} - \equref{C4}.

Indeed, by considering $g = 1_H$ in $(C)$ and then by applying
$\varepsilon_A \otimes Id$ we obtain $h \triangleleft' c = h
\triangleleft c$, for all $h\in H$ and $c\in A$. If we consider
again $g = 1_H$ we obtain, after applying first $Id \otimes
\varepsilon_H$ and then inverting $u$, that \equref{C2} holds.
Relation \equref{C3} is obtained by considering $c = 1_A$ in
$(C)$, applying $Id \otimes \varepsilon_H$ and finally inverting
$u$. To end with, relation \equref{C4} follows by considering $c =
1_A$ and by applying $\varepsilon_A \otimes Id$ in $(C)$.

Conversely, suppose that $h \triangleleft' c = h \triangleleft c$,
for all $h\in H$ and $c\in A$ and there exists a coalgebra lazy
$1$-cocyle $u$ such that relations \equref{C2} - \equref{C4} are
fulfilled. We then have (we denote by $LHS$ the left hand side of
$(C)$):
\begin{eqnarray*}
LHS &\stackrel{ \equref{C2}-\equref{C3} } {=}& u(h_{(1)(1)})
(h_{(1)(2)} \triangleright c_{(1)(1)})S_{A}\Bigl(u
\bigl(h_{(1)(3)} \triangleleft
c_{(1)(2)}\bigl)\Bigl)u(h_{(2)(1)} \triangleleft c_{(2)(1)})\\
&&\bigl( (h_{(2)(2)} \triangleleft c_{(2)(2)}) \triangleright
u(g_{(1)(1)})\bigl) f\bigl( (h_{(2)(3)} \triangleleft c_{(2)(3)})
\triangleleft u(g_{(1)(2)}) , \, g_{(1)(3)}\bigl)\\
&& S_{A}\Bigl(u \bigl(h_{(2)(4)} \triangleleft c_{(2)(4)}\bigl)
\cdot' g_{(1)(4)}\Bigl) u\bigl((h_{(3)} \triangleleft c_{(3)})
\cdot' g_{(2)}\bigl) \ltimes (h_{(4)} \triangleleft c_{(4)})
\cdot' g_{(3)}\\
&{=}& u(h_{(1)}) (h_{(2)} \triangleright
c_{(1)})\underline{S_{A}\Bigl(u \bigl(h_{(3)} \triangleleft
c_{(2)}\bigl)\Bigl)u(h_{(4)} \triangleleft c_{(3)})}\bigl(
(h_{(5)} \triangleleft c_{(4)})
\triangleright u(g_{(1)})\bigl)\\
&& f\bigl( (h_{(6)} \triangleleft c_{(5)})\triangleleft u(g_{(2)})
, \, g_{(3)}\bigl) \underline{S_{A}\Bigl(u \bigl(h_{(7)}
\triangleleft c_{(6)}\bigl) \cdot' g_{(4)}\Bigl) u\bigl((h_{(8)}
\triangleleft
c_{(7)}) \cdot' g_{(5)}\bigl)}\\
&&\ltimes (h_{(9)} \triangleleft c_{(8)}) \cdot' g_{(6)}\\
&{=}& u(h_{(1)}) (h_{(2)} \triangleright c_{(1)}) \bigl( (h_{(3)}
\triangleleft c_{(2)}) \triangleright u(g_{(1)})\bigl)f\bigl(
(h_{(4)} \triangleleft c_{(3)})\triangleleft u(g_{(2)}) , \,
g_{(3)}\bigl)\\
&& \ltimes (h_{(5)} \triangleleft c_{(4)}) \cdot' g_{(4)}\\
&\stackrel{(2d)}{=}& u(h_{(1)}) \Bigl(h_{(2)} \triangleright
\bigl(c_{(1)} u(g_{(1)})\bigl)\Bigl)f\bigl( (h_{(3)} \triangleleft
c_{(2)})\triangleleft u(g_{(2)}) , \, g_{(3)}\bigl) \\
&& \ltimes (h_{(4)} \triangleleft c_{(3)}) \cdot' g_{(4)}\\
&\stackrel{\equref{C4} }{=}& u(h_{(1)}) \Bigl(h_{(2)}
\triangleright \bigl(c_{(1)} u(g_{(1)})\bigl)\Bigl)f\bigl(
(h_{(3)} \triangleleft
c_{(2)})\triangleleft u(g_{(2)}) , \, g_{(3)}\bigl)\\
&& \ltimes (h_{(4)} \triangleleft c_{(3)}u(g_{(4)})) \cdot g_{(5)}\\
&\stackrel{\equref{laz}}{=}& u(h_{(1)}) \Bigl(h_{(2)}
\triangleright \bigl(c_{(1)} u(g_{(1)})\bigl)\Bigl)f\bigl(
(h_{(3)} \triangleleft
c_{(2)})\triangleleft u(g_{(2)}) , \, g_{(4)}\bigl)\\
&& \ltimes (h_{(4)} \triangleleft c_{(3)}u(g_{(3)})) \cdot g_{(5)}\\
\end{eqnarray*}
where the third equality holds by using the antipode conditions
and the fact that $u$ is a coalgebra map. Thus $(C)$ holds and the
proof is now finished.
\end{proof}

Even if for the classification problem we only set the Hopf
algebra structure of $A$ and the coalgebra structure of $H$,
\thref{3.4} tells us that we can set also the coalgebra map $
\triangleleft : H\otimes A \to H$. We shall phrase \thref{3.4} as
a description of the skeleton for the category $\Cc (A, H,
\triangleleft )$ defined below.

Let $A$ be a Hopf algebra, $H$ a coalgebra with a fixed group-like
element $1_H \in H$ and $ \triangleleft : H\otimes A \to H$ a
morphism of coalgebras. Let ${\mathcal ES} (A, H, \triangleleft)$
be the set of all triples $(\cdot, \triangleright, \, f )$ such
that $\bigl((H, 1_X, \cdot), \triangleleft, \, \triangleright, \,
f \bigl)$ is a Hopf algebra extending structure of $A$. The next
definition is the Hopf algebra version for unified product
\cite[Definition 7.31]{R} given for extensions of groups.

\begin{definition}\delabel{equiv} Two elements
$(\cdot, \triangleright, \, f )$, $(\cdot', \triangleright', \,
f')$ of ${\mathcal ES} (A, H, \triangleleft)$ are called
\emph{cohomologous} and we denote this by $(\cdot, \triangleright,
\, f ) \approx (\cdot', \triangleright', \, f')$ if there exists a
coalgebra lazy $1$-cocyle $u \in H^{1}_{l, c} (H, A)$ such that
the compatibility conditions \equref{C2} - \equref{C4} are
fulfilled.
\end{definition}

It follows from \thref{3.4} that $(\cdot, \triangleright, \, f )
\approx (\cdot', \triangleright', \, f')$ if and only if there
exists $\varphi: A \ltimes' H \rightarrow A \ltimes H $ a left
$A$-module, a right $H$-comodule and a Hopf algebra map. Moreover,
from \leref{3.1} we obtain that any such map $\varphi: A \ltimes'
H \rightarrow A \ltimes H $ is an isomorphism, thus $\approx$ is
an equivalence relation on the set ${\mathcal ES} (A, H,
\triangleleft)$. We denote by $H^{2}_{l, c} (H, A, \triangleleft)$
the quotient set ${\mathcal ES} (A, H, \triangleleft)/\approx$.

Let $\Cc (A, H, \triangleleft )$ be the category whose class of
objects is the set ${\mathcal ES} (A, H, \triangleleft)$. A
morphism $\varphi : \bigl(\, \cdot, \triangleright, \, f \bigl)
\to \bigl(\, \cdot, \triangleright', \, f' \bigl)$ in $\Cc (A, H,
\triangleleft)$ is a Hopf algebra morphism $\varphi: A \ltimes H
\rightarrow A \ltimes' H $ that is a left $A$-module and a right
$H$-comodule map. Thus we obtain the categorical version of
\thref{3.4}:

\begin{corollary}\textbf{(Schreier theorem for unified
products)}\colabel{3.67} Let $A$ be a Hopf algebra, $H$ a
coalgebra with a group-like element $1_H$ and $ \triangleleft :
H\otimes A \to H$ a morphism of coalgebras. There exists a
bijection between the set of objects of the skeleton of the
category $\Cc (A, H, \triangleleft)$ and the quotient set
$H^{2}_{l, c} (H, A, \triangleleft)$.
\end{corollary}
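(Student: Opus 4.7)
The corollary is really a categorical repackaging of \thref{3.4} combined with \leref{3.1}, so my plan is to assemble the already-available pieces rather than prove anything genuinely new. The proof has essentially two halves: checking that $\approx$ is an equivalence relation on ${\mathcal ES} (A, H, \triangleleft)$, and then producing the bijection between $H^{2}_{l, c} (H, A, \triangleleft)$ and the set of isomorphism classes of objects of $\Cc (A, H, \triangleleft)$.

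First, I would verify that $\approx$ is an equivalence relation. Reflexivity is witnessed by the trivial coalgebra lazy $1$-cocyle $u_0: H\to A$, $u_0(h) := \varepsilon_H(h) 1_A$, which clearly satisfies \equref{laz} and, when plugged into \equref{C2}--\equref{C4}, reproduces $(\cdot, \triangleright, f)$ from itself. For symmetry and transitivity, the cleanest route is to invoke \thref{3.4} in combination with \leref{3.1}: $(\cdot, \triangleright, f) \approx (\cdot', \triangleright', f')$ if and only if there is a left $A$-module, right $H$-comodule, Hopf algebra map $\varphi : A \ltimes' H \to A \ltimes H$, and by \leref{3.1} any such $\varphi$ is an isomorphism, whose inverse is again a left $A$-module, right $H$-comodule, Hopf algebra map. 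This gives symmetry. Transitivity follows by composing two such morphisms $\varphi$ and $\varphi'$; under the correspondence of \leref{3.1} and the discussion following \deref{lazydef}, the composition corresponds to the convolution product $u * v$, which was already shown to remain a coalgebra lazy $1$-cocyle.

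Next I would construct the bijection. Denote by ${\rm Sk}\,\Cc (A, H, \triangleleft)$ the set of objects of the skeleton, i.e., isomorphism classes of objects in $\Cc (A, H, \triangleleft)$. Define
\[
\Phi : H^{2}_{l, c} (H, A, \triangleleft) \longrightarrow {\rm Sk}\,\Cc (A, H, \triangleleft), \qquad \Phi\bigl([(\cdot, \triangleright, f)]_{\approx}\bigl) := [(\cdot, \triangleright, f)]_{\cong}.
\]
Well-definedness is exactly the ``if'' direction of \thref{3.4}: if $(\cdot, \triangleright, f) \approx (\cdot', \triangleright', f')$, the associated coalgebra lazy $1$-cocyle $u$ furnishes a morphism $\varphi : A \ltimes' H \to A \ltimes H$ in $\Cc (A, H, \triangleleft)$, which is an isomorphism by \leref{3.1}. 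Injectivity is the ``only if'' direction of \thref{3.4}: any isomorphism in $\Cc (A, H, \triangleleft)$ is in particular a morphism as in \thref{3.4}, and therefore forces $(\cdot, \triangleright, f) \approx (\cdot', \triangleright', f')$. Surjectivity is immediate since every isomorphism class $[(\cdot, \triangleright, f)]_{\cong}$ has a representative in ${\mathcal ES} (A, H, \triangleleft)$ by definition of $\Cc (A, H, \triangleleft)$.

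The only subtle point — really the main obstacle — is a bookkeeping issue hidden in the definition of the category: \thref{3.4} requires the right action $\triangleleft$ to be the same on both sides in order for an $(A,H)$-morphism between two unified products to exist, so the category $\Cc (A, H, \triangleleft)$ has been set up precisely so that this constraint is automatic. One should therefore check that if $\varphi$ is an arbitrary morphism in $\Cc (A, H, \triangleleft)$, then indeed $\triangleleft' = \triangleleft$ (which is exactly the first piece of information extracted from compatibility $(C)$ in the proof of \thref{3.4}), and hence the machinery of \thref{3.4} applies. Once this is noted, everything assembles into the stated bijection.
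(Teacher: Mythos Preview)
Your proposal is correct and follows exactly the route the paper takes: the corollary is stated as the immediate categorical reformulation of \thref{3.4}, using \leref{3.1} to see that every morphism in $\Cc (A, H, \triangleleft)$ is an isomorphism, so that isomorphism classes coincide with $\approx$-classes. The paper does not spell out the equivalence-relation check or the explicit bijection $\Phi$ as you do, but your argument is a faithful expansion of what the text leaves implicit.
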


$H^{2}_{l, c} (H, A, \triangleleft)$ is for the classification of
the unified products the counterpart of the second cohomology
group for the classification of an extension of an abelian group
by a group \cite[Theorem 7.34]{R}.

We can apply \thref{3.4} to obtain classification theorems for
various special cases of the unified products: for instance, Doi's
results on the classification of crossed products (\cite{D}) is
obtain as a special case if we let $ \triangleleft' =
\triangleleft $ be the trivial actions. Now, we shall indicate the
classification of bicrossed product of Hopf algebras.

\begin{corollary}\textbf{(Schreier theorem for bicrossed
products)}\colabel{3.6} Let $A$ and $H$ be two Hopf algebras and
$\bigl(A, H, \triangleleft, \, \triangleright \bigl)$, $\bigl(A,
H, \triangleleft', \, \triangleright' \bigl)$ two matched pairs of
Hopf algebras. Then $A\bowtie H \cong A\bowtie' H$ (isomorphism of
Hopf algebras, left $A$-modules and right $H$-comodules) if and
only if $\triangleleft'  =  \triangleleft$ and there exists a
coalgebra lazy $1$-cocyle $u \in H^{1}_{l, c} (H, A)$  such that:
\begin{eqnarray*}
h \triangleright' c &{=}& u(h_{(1)}) (h_{(2)} \triangleright
c_{(1)}) S_{A}\Bigl(u \bigl(h_{(3)} \triangleleft
c_{(2)}\bigl)\Bigl)\\
u(h_{(1)})(h_{(2)} \triangleright u(g_{(1)}))
S_{A}\Bigl(u\bigl(h_{(3)}  g_{(2)}\bigl)\Bigl) &{=}&
\varepsilon_{H}(g)\varepsilon_{H}(h)1_{A}\\
h \triangleleft u(g) &{=}& h \, \varepsilon_H (g)
\end{eqnarray*}
for all $h$, $g \in H$ and $c \in A$.
\end{corollary}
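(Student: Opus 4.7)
The plan is to derive the statement directly from \thref{3.4} by specializing to the case where both extending structures arise from matched pairs of Hopf algebras. Indeed, by part 1 of \exref{3exemple}, a matched pair $(A, H, \triangleleft, \triangleright)$ corresponds to the Hopf algebra extending structure $\Omega(A) = (H, \triangleleft, \triangleright, f)$ in which $H$ carries its own Hopf algebra multiplication (here denoted $\cdot$) and the cocycle $f$ is trivial, i.e.\ $f(g, h) = \varepsilon_H(g) \varepsilon_H(h) 1_A$ for all $g, h \in H$. The associated unified product $A \ltimes H$ then coincides, by the same example, with the bicrossed product $A \bowtie H$. So two bicrossed products $A \bowtie H$ and $A \bowtie' H$ are isomorphic as Hopf algebras, left $A$-modules and right $H$-comodules if and only if the corresponding extending structures are cohomologous in the sense of \deref{equiv}, and \thref{3.4} gives exactly this via a coalgebra lazy $1$-cocycle $u \in H^{1}_{l, c} (H, A)$ with $\triangleleft' = \triangleleft$ and the three compatibilities \equref{C2}--\equref{C4} holding.

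The first step is to notice that condition \equref{C2} is already the first compatibility in the statement, so nothing has to be done there. Next I would substitute $f = f' = \varepsilon_H \otimes \varepsilon_H \cdot 1_A$ into \equref{C3}. Using \equref{6} (that $\triangleleft$ is a coalgebra map) and the fact that $u$ is a coalgebra map, the counit applied to $h_{(3)} \triangleleft u(g_{(2)})$ factorises as $\varepsilon_H(h_{(3)}) \varepsilon_H(g_{(2)})$, and combined with $\varepsilon_H(g_{(3)})$ this collapses the $f$--term. What remains is precisely the second compatibility
\[
u(h_{(1)})(h_{(2)} \triangleright u(g_{(1)})) S_{A}\bigl(u(h_{(3)} g_{(2)})\bigr) = \varepsilon_{H}(g)\varepsilon_{H}(h) 1_{A}.
\]

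The main (and only slightly non-routine) step is to reduce \equref{C4}, which now reads $hg = (h \triangleleft u(g_{(1)})) \cdot g_{(2)}$ since $\cdot' = \cdot$ is just the multiplication in the Hopf algebra $H$, to the cleaner form $h \triangleleft u(g) = h \, \varepsilon_H(g)$. The forward direction is immediate: assuming $h \triangleleft u(g) = h \, \varepsilon_H(g)$, one has $(h \triangleleft u(g_{(1)})) g_{(2)} = h \, \varepsilon_H(g_{(1)}) g_{(2)} = hg$. For the converse, the key is to use the antipode $S_H$ of $H$: multiplying both sides of \equref{C4} on the right by $S_H(g_{(3)})$ and using the antipode axiom yields
\[
h \, \varepsilon_H(g) = (h \triangleleft u(g_{(1)})) g_{(2)} S_H(g_{(3)}) = h \triangleleft u(g),
\]
which is the third stated condition. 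The three conditions of the corollary are therefore equivalent to \equref{C2}--\equref{C4}, and the isomorphism assertion (together with its form given by \equref{3.0.01}) is supplied by \leref{3.1} and \thref{3.4}. No further computation is needed.
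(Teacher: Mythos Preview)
Your proposal is correct and follows the same approach as the paper: specialize \thref{3.4} to trivial cocycles $f=f'$ and then simplify \equref{C2}--\equref{C4}. The paper's own proof is a two-line sketch that only mentions the reduction of \equref{C4}; you have spelled out the reductions of \equref{C3} and \equref{C4} explicitly (in particular the convolution-with-$S_H$ trick for the converse direction of \equref{C4}), which is exactly the routine computation the paper leaves to the reader.
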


\begin{proof} We apply \thref{3.4} for the case when $f$ and $f'$
are the trivial cocycles. As the multiplication on the algebra $H$
is the same (i.e. $\cdot = \cdot'$), the condition \equref{C4} in
\thref{3.4} takes the equivalent form $h \triangleleft u(g) = h
\varepsilon_H (g)$, for all $h$, $g \in H$.
\end{proof}

The construction of unified products is a challenging problem
considering the number of compatibilities that need to be
fulfilled. In particular, an example of an unified product which
is neither a crossed product nor a bicrossed product is
interesting in the picture. We provide such an example below: it
is a Hopf algebra $k[A_4] \ltimes k[S] \cong k[A_6]$, where $A_n$
is the alternating group on a set with $n$ elements and $S$ is a
set with thirty elements.

\begin{example}
Let $G$ be a group and $(X, 1_X)$ a pointed set. We consider the
group Hopf algebra $A := k[G]$ and the group-like coalgebra $H :=
k[X]$. We note that coalgebra morphisms between two group-like
coalgebras are in one to one correspondence with the maps between
the corresponding sets. Thus, any bialgebra extending structure
$(k[X], \, \lhd, \, \rhd, \, f)$ of the Hopf algebra $k[G]$ is
induced by an extending structure $(X, \lhd', \, \rhd', \, f')$ of
the group $G$ in the sense of \cite[Definition 2.3]{am3}. Moreover
there exists a canonical isomorphism of bialgebras
$$
k[G] \ltimes k[X] \cong k [G \ltimes X]
$$
where $G\ltimes X$ is the unified product at the level of groups
(see \cite{am3} for further details). This generalizes the fact
that a bicrossed product of two group Hopf algebras is isomorphic
to the group Hopf algebra of the bicrossed product of the
corresponding groups \cite[Example 1, pg. 207]{Kassel}. The same
type of isomorphism holds also for crossed products of Hopf
algebras between two group Hopf algebras.

Now, let $A_6$ be the alternating group on a set with six
elements. $A_6$ is the simple group of smallest order that cannot
be written as a bicrossed product of two proper subgroups
(\cite{WW}). Being a simple group it can not be written neither as
a crossed product of two proper subgroups. On the other hand,
$A_6$ can be written as an unified product between any of its
subgroups and an extending structure. For instance, we can write
$$
A_6 \cong A_4 \ltimes S
$$
for an extending structure $\bigl( S, \, 1_{S}, \, \alpha, \,
\beta, \, f, \, \ast , i \bigl )$ of $A_4$, where $S$ is a set of
representatives for the right cosets of $A_4$ in $A_6$ with $30$
elements such that $1 \in S$. Thus there exists an example of an
unified product for Hopf algebras $k[A_4] \ltimes k[S] \cong k
[A_6]$ which is neither a crossed product nor a bicrossed product
of two Hopf algebras.
\end{example}

Two general methods for constructing unified products are proposed
in \cite{am4}. One of them constructs an unified product starting
with a minimal set of data: a Hopf algebra $A$, a unitary not
necessarily associative bialgebra $H$ which is a right $A$-module
coalgebra and a unitary coalgebra map $\gamma : H \to A$
satisfying four technical compatibility conditions (\cite[Theorem
2.9]{am4}).

\section{Acknowledgment}
A.L. Agore is ''Aspirant'' Fellow of the Fund for Scientific
Research–-Flanders (Belgium) (F.W.O.– Vlaanderen). G. Militaru was
supported from CNCSIS grant 24/28.09.07 of PN II "Groups, quantum
groups, corings and representation theory".

\end{document}